\DeclareMathOperator{\R}{\mathbb{R}}
\DeclareMathOperator{\Rw}{\mathbb{R}\{\!\{\varpi\}\!\}}
\DeclareMathOperator{\C}{\mathbb{C}}
\DeclareMathOperator{\K}{\mathbb{K}}
\DeclareMathOperator{\N}{\mathbb{N}}
\DeclareMathOperator{\F}{\mathbb{F}}
\DeclareMathOperator{\Fbar}{\overline{\mathbb{F}}}
\DeclareMathOperator{\I}{\mathcal{I}}
\DeclareMathOperator{\J}{\mathcal{J}}
\DeclareMathOperator{\LT}{LT}
\DeclareMathOperator{\LC}{LC}
\DeclareMathOperator{\rad}{rad}
\DeclareMathOperator{\boo}{\texttt{boo}}
\DeclareMathOperator{\rem}{rem}
\newtheorem{theorem}{Theorem}[section]
\newtheorem{lemma}[theorem]{Lemma}
\newtheorem{proposition}[theorem]{Proposition}
\newtheorem{corollary}[theorem]{Corollary}
\theoremstyle{definition}
\newtheorem{definition}[theorem]{Definition}
\newtheorem{example}[theorem]{Example}
\theoremstyle{remark}
\newtheorem{remark}[theorem]{Remark}
\numberwithin{equation}{section}
\begin{document}

\title[Deciding lower-boundedness 
of polynomials]{Deciding lower-boundedness 
of polynomials}



\author{Nguyen Hong Duc}
\address{Nguyen Hong Duc: TIMAS, Thang Long University, Vietnam}
\curraddr{}
\email{duc.nh@thanglong.edu.vn}
\thanks{}

\author{Vu Trung Hieu}
\address{Vu Trung Hieu: Center for Advanced Intelligence Project, RIKEN, Japan}
\curraddr{}
\email{trunghieu.vu@riken.jp}
\thanks{}

\subjclass[2020]{13P10, 90C23}

\date{}

\dedicatory{}


\begin{abstract}
This paper addresses the problem of deciding the lower-bound\-edness of an arbitrary real polynomial $p$ in $n$ variables. To this end, we introduce and investigate the $n$-th non-critical tangency value polynomial of 
$p$ at a point $a$ in the ambient space, denoted by $\varphi_{p,a}$. This is a univariate polynomial with coefficients in the field of Puiseux series, and its roots represent the $n$-th non-critical tangency values of $p$ at $a$. 

We demonstrate that, when $a$ is generic, the decision problem can be reduced to identifying the number of certain roots of $\varphi_{p,a}$, which can be effectively handled by Sturm’s theorem. 
Building on this approach, we design a probabilistic algorithm that decides whether a given real polynomial $p$ is lower-bounded. 
The experimental results indicate that, in the tested scenarios, the proposed algorithm is highly efficient and outperforms the sole existing method based on quantifier elimination.
\end{abstract}

\maketitle

\section{Introduction}\label{sec:intro} 
A real polynomial $p$ in $n$ variables is said to be lower-bounded over $\R^n$ if there exists a real number $\gamma$ such that all of its values are greater than $\gamma$. This property is fundamental in polynomial optimization, since it guarantees that the infimum of $p$ is finite. In fact, lower-boundedness constitutes an explicit prerequisite for algorithms, for example in \cite{schweighofer2006global,greuet2014}, that compute global infima.

In the seminal work on polynomial optimization \cite{shor1987class}, Shor emphasized the intrinsic difficulty of deciding whether a polynomial is lower-bounded, stating that
\begin{flushright}
``\textit{Checking that a given polynomial function is lower-bounded is far from
trivial}.''
\end{flushright}
This decision problem can be solved completely by using quantifier elimination over the reals. In particular, the polynomial $p$ is lower-bounded if and only if the following statement is true:
\begin{equation}\label{eq:QE}
    \exists \, t \, \forall \, x: p(x)-t>0.
\end{equation}
Determining the Boolean truth value of this statement can be achieved via the cylindrical algebraic decomposition (CAD) \cite{Collins1975}; however, its computational complexity grows doubly exponentially with the number of variables $n$ \cite{davenport1988real}. Although many refinements of CAD have been developed (see \cite{chen2016quantifier} and the references therein), solving the decision problem by means of quantifier elimination still suffers from extremely high computational complexity. Moreover, to the best of our knowledge, no approach independent of quantifier elimination has yet been proposed to address this problem.

Despite its fundamental importance in polynomial optimization, there exist only a few works addressing the theoretical characterization of lower-bound\-eness. Recently, a necessary and sufficient condition for a polynomial $p$ to be lower-bounded, formulated in terms of its tangency values, was established in \cite{pham2023tangencies}; However, its proof does not yield an explicit algorithmic framework for resolving the lower-boundedness decision problem.


In this paper, we establish a new necessary and sufficient criterion for lower-boundedness. Our approach builds upon the technique developed in \cite{duc2025nonneg} for deciding non-negativity using Sturm’s theorem. In particular, we introduce the so-called the $n$-th non-critical tangency value polynomial of 
$p$ at a point $a$ in $\R^n$, denoted by $\varphi_{p,a}$, see Definition  \ref{def:tangency_pol}. This is a univariate polynomial with coefficients in the field of Puiseux series $\F=\R\{\!\{\varpi\}\!\}$, where $\varpi$ is a positive infinitesimal. 
The roots of $\varphi_{p,a}$ correspond to the $n$-th non-critical tangency values of $p$. Our main theoretical result (Theorem \ref{thm:lower-bounded}) states that, for a generic point $a\in\R^n$, 
\begin{center}
    \textit{$p$ is lower-bounded if and only if ${v}(p,a)=0$},
\end{center}
where $v(p,a)$ is the difference of the numbers of sign variations of the Sturm sequence of $\varphi_{p,a}$ at minus infinites $-\infty_{\F}$ and $-\infty$. Consequently, the decision problem is reduced to identifying those roots of $\varphi_{p,a}$ that lie in the interval $(-\infty_{\F},-\infty)$. 

Building on this approach, we design a probabilistic algorithm (Algorithm \ref{alg:lbounded}) that decides whether a given real polynomial $p$ is lower-bounded. 
Our experiments indicate that the proposed algorithm achieves higher efficiency than using quantifier elimination for small-scale dense polynomials.

The remainder of this paper is organized as follows.
Section~\ref{sec:pre} introduces notations and auxiliary results on asymptotic tangency values, computational algebraic geometry, Puiseux series, and Sturm’s theorem.
Fundamental results concerning non-critical tangency value polynomials are established Section~\ref{sec:tangency}. 
Section~\ref{sec:Tgoodness} defines the property of being T-good, proves its genericity, and provides a procedure to test it.
Section~\ref{sec:lbounded} presents Algorithm \ref{alg:lbounded} for deciding lower-boundedness of polynomials.
Finally, Section~\ref{sec:experiment} reports the experimental results and compares the performance of the proposed algorithm with that of quantifier elimination.

\section{Background and notations}\label{sec:pre}

We denote by $R[x]$ the ring of polynomials in the variables $x=(x_1,\dots,x_n)$ with coefficients in a ring $R$.
We use $\R$ and $\C$ to denote the fields of real and complex numbers, respectively.

\subsection{Asymptotic tangency values} Let $p$  be a non-constant polynomial in $\R[x]$. 
The \textit{tangency set} \cite{pham2016genericity} of $p$ at a point $a\in \R^n$ is defined to be the following set
\[\Gamma_{\R}(p,a) :=  \big\{x \in \R^n \, | \text{ there exists } \lambda\in\R \text{ s.t. } \nabla p (x) -\lambda (x-a)= 0\big\},\]
where $\nabla p $ is the gradient vector of $p$.
The condition in the definition of $\Gamma_{\R}(p,a)$ is equivalent to the fact that the system of two vectors $\{\nabla p (x),
x-a\}$ is linear dependent. Geometrically, $\Gamma_{\R}(p,a)$ consists of all points $x$ in $\R^n$ where the level
set of $p$ is tangent to the sphere centered at $a$ with radius $\|x-a\|$. The set $\Gamma_{\R}(p,a)$ is an  unbounded semi-algebraic set \cite[Lemma 2.3]{pham2016genericity}. 
It follows from the definition that $\Gamma_{\R}(p,a)$  contains the set of critical points of $p$ - the real zeros of $\nabla p$. 

Let $\widetilde{\R} := \R \cup \{-\infty, +\infty\}$. The set of \textit{asymptotic tangency values} of $p$ at $a$ is defined as the following set: 
$$T_{\infty}(p,a)=\Big\{\lambda\in \widetilde{\R}\mid \exists x_k\in \Gamma_{\R}(p,a), \ \|x_k\|\to +\infty, \ p(x_k)\to \lambda\Big\}\subset \widetilde{\R}.$$

The set of asymptotic tangency values is finite. Moreover, $p$ is lower-bounded if and only if the minimum of $T_{\infty}(p,a)$ is finite or equal to $+\infty$. These statements are summarized in the following theorem.

\begin{theorem}[{\cite[Lemma 2.1]{vui2008lojasiewicz}, and \cite[Theorem 4.1]{pham2023tangencies}}]\label{thm:pham_tangency}
Let $a\in\R^n$ be given. Then, the set $T_{\infty}(p,a)$ is finite. Moreover, the polynomial $p$ is lower-bounded if and only if $$\min\big\{T_{\infty}(p,a)\big\}>-\infty.$$
\end{theorem}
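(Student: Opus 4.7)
The plan is to treat the two assertions separately: first the finiteness of $T_{\infty}(p,a)$, then the characterization of lower-boundedness in terms of $\min T_{\infty}(p,a)$.

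For finiteness I would exploit the semi-algebraic nature of $\Gamma_{\R}(p,a)$. The key observation is that, for every $r>0$, the slice $\Gamma_{\R}(p,a)\cap\{x:\|x-a\|=r\}$ consists exactly of the critical points of the restriction of $p$ to the sphere of radius $r$ centered at $a$, by the Lagrange multiplier criterion implicit in the definition of $\Gamma_{\R}(p,a)$. Hence the set $$\{(\lambda,r)\in\R\times\R_{>0}\mid\lambda\text{ is a critical value of }p|_{\|x-a\|=r}\}$$ is semi-algebraic. By Hardt's semi-algebraic triviality theorem (applied at $r=\infty$), or equivalently the fact that a one-variable semi-algebraic multifunction has only finitely many distinct Puiseux branches as $r\to\infty$, the set of limit values of those critical values as $r\to\infty$ is finite. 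Unwinding the definitions identifies this limit set with $T_{\infty}(p,a)$.

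The direction $(\Rightarrow)$ of the characterization is immediate: if $p\ge\gamma$ on $\R^n$, then every limit of values of $p$ is also $\ge\gamma$, so $\min T_{\infty}(p,a)\ge\gamma>-\infty$. For the converse I would argue by contraposition, assuming $p$ is not lower-bounded and producing $-\infty\in T_{\infty}(p,a)$. For each $r>0$ let $m_r$ denote the minimum of $p$ on the closed ball of radius $r$ around $a$, attained at some point $x_r^{\ast}$; since $p$ is unbounded below and continuous, $m_r\to -\infty$. The set of critical values of $p$ on $\R^n$ is finite (it is a semi-algebraic subset of $\R$ of dimension zero, by Sard), so for $r$ large enough $m_r$ is strictly smaller than all those critical values, forcing $x_r^{\ast}$ to lie on the boundary sphere $\|x-a\|=r$. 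The Lagrange multiplier condition gives $\lambda_r\in\R$ with $\nabla p(x_r^{\ast})=\lambda_r(x_r^{\ast}-a)$, so $x_r^{\ast}\in\Gamma_{\R}(p,a)$. Because $\|x_r^{\ast}\|\ge r-\|a\|\to\infty$ and $p(x_r^{\ast})=m_r\to -\infty$, the definition of $T_{\infty}(p,a)$ yields $-\infty\in T_{\infty}(p,a)$, contradicting $\min T_{\infty}(p,a)>-\infty$.

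The main obstacle, to my mind, is the finiteness statement: it is elementary to produce candidate limit values from unbounded sequences in $\Gamma_{\R}(p,a)$, but ruling out infinitely many distinct limits really does require the semi-algebraic input (Hardt's theorem, or the curve selection lemma at infinity applied componentwise to a semi-algebraic cell decomposition of $\Gamma_{\R}(p,a)$). The characterization itself only needs continuity, compactness of closed balls, finiteness of the critical-value set, and the Lagrange multiplier criterion.
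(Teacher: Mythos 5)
Your argument is correct, but note that the paper itself offers no proof of this statement: Theorem \ref{thm:pham_tangency} is imported verbatim from \cite[Lemma 2.1]{vui2008lojasiewicz} and \cite[Theorem 4.1]{pham2023tangencies}, so there is no internal proof to compare against. Your reconstruction is a legitimate and essentially standard one. For finiteness, identifying $T_{\infty}(p,a)$ with the set of limits, as $r\to\infty$, of the critical values of $p$ restricted to the spheres $\{\|x-a\|=r\}$ is exactly right (for $r>0$ the tangency condition is the Lagrange condition, and $\|x_k\|\to\infty$ forces $r_k\to\infty$); the semi-algebraic set $\{(\lambda,r):\lambda\ \text{critical value of}\ p|_{\|x-a\|=r}\}$ has finite fibers, hence dimension at most one, and its unbounded branches are eventually graphs of semi-algebraic functions of $r$ with limits in $\widetilde{\R}$, which is the finiteness you need. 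One small point worth making explicit is that a sequence of critical values with $r_k\to\infty$ may hop between branches, so you should pass to a subsequence lying on a single branch before taking limits; this is routine but is the step that actually ties ``limit points of the fibers'' to ``limits along branches.'' For the characterization, your ball-minimizer argument (for large $r$ the minimizer of $p$ on the closed ball lies on the boundary sphere because $m_r$ eventually drops below the finitely many critical values of $p$, hence is a tangency point) is a clean, elementary route; the cited references argue in the same spirit but lean more heavily on the curve selection lemma at infinity, which is the tool this paper itself uses later (e.g.\ in Lemma \ref{prop:asym_tangency_values}). Either way the inputs are the same semi-algebraic facts (Tarski--Seidenberg, Sard for polynomial maps, tameness at infinity), and your write-up contains no genuine gap.
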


\subsection{Ideals and Gr\"{o}bner bases} Let $\K$ be a field, and denote by $\overline{\K}$ its algebraic closure.
An additive subgroup $\I$ of $\K[x]$ is said
to be an \textit{ideal} of $\K[x]$ if $pq\in \I$ for any $p\in \I$ and
$q\in\K[x]$. Given $p_1,\dots,p_s $ in $ \K[x]$, we denote by
$\left\langle
p_1,\dots,p_s \right\rangle$ the ideal generated by $p_1,\dots,p_s$. If $\I$
is an ideal of $\K[x]$ then, according to Hilbert's basis theorem
 \cite[Chapter 2, \S 5]{cox2013},
there exist $p_1,\dots,p_s \in \K[x]$ such that
$\I=\left\langle p_1,\dots,p_s \right\rangle$.

\if
The \textit{radical} of $\I$, denoted by $\rad(\I)$, is the following set:
\[\big\{g\in\K[x]:g^k\in \I \text{ for some }k \in \N\big\}.\]
The set $\rad(\I)$ is also an ideal that contains $\I$.
One says that an ideal $\I$ is radical if $\I=\rad(\I)$.
It should be noted that the radical of any ideal is itself radical.
\fi 

The \textit{variety} of an ideal $\I$, denoted by $V_{\overline{\K}}(\I)$, is defined as the set of all common complex zeros of the polynomials in $\I$:
$$V_{\overline{\K}}(\I) = \Big\{\alpha\in\overline{\K}^n: p(\alpha) = 0, \forall p\in \I\Big\}.$$

Let $>$ be a monomial ordering on $\K[x]$
and $\I\neq \{0\}$ be an ideal of $\K[x]$. We denote
by $\LT_>(\I)$ the set of all leading terms $\LT_>(p)$ for $p\in \I$, and  by
$\left\langle \LT_>(\I)\right\rangle $ the ideal generated by the elements
of $\LT_>(\I)$. A subset $G=\{p_1,\dots,p_s\}$ of $\I$ is said to be a \textit{Gr\"{o}bner
    basis} of $\I$ with respect to the monomial order $>$ if
\[ \left\langle\LT_>(p_1),\dots,\LT_>(p_s)\right\rangle=\left\langle\LT_>(\I)\right\rangle.\]
A Gröbner basis of $\I$ really is a generating set of $\I$, i.e. $\left\langle p_1,\dots,p_s\right\rangle=\I$. Every ideal in $\K[x]$ has a Gr\"{o}bner basis. A Gr\"{o}bner basis $G$ is \textit{reduced} if the two following conditions hold: the leading coefficient of $p$ is $1$, for all $p\in G$; and, for all $p\in G$, there are no monomials of $p$ lying in $\left\langle \LT_>(G\setminus\{p\})\right\rangle$. Every ideal $\I$ has a unique reduced Gr\"{o}bner basis. Algorithms for computing Gröbner bases, such as those presented in \cite{buchberger2006,faugere2002new}, are well established. For additional background, we refer the reader to \cite{cox2013}. 

\subsection{The field of Puiseux series over $\R$}\label{subsec:signchanges} Denote by $\K\{\!\{\varpi\}\!\}$ the \textit{field of Puiseux series} over a field $\K$,  where $\varpi$ is a positive infinitesimal. Recall  that, $\K\{\!\{\varpi\}\!\}$ 
consists of the series of form
$$\varphi(\varpi)=\sum_{n\geq \ell}a_n \varpi^{n/N}\ \text{ with } n, N\in \Bbb Z, a_n\in \Bbb K, a_{\ell}\neq 0, N>0.$$
The exponent $\ell/N$ is called the order of $\varphi$, denoted by $\mathrm{ord}\ \varphi$. The term $a_{\ell}\varpi^{\ell/N}$ and $a_{\ell}$ are called the leading term and leading coefficient of $\varphi$ and denoted by $\LT(\varphi)$ and $\LC(\varphi)$ respectively. There is a morphism 
$$\lim_{\varpi\to 0}\colon \R\{\!\{\varpi\}\!\}\to \R\cup\{-\infty,+\infty\}$$ satisfying
$$
\lim_{\varpi\to 0}\varphi(\varpi)=
\begin{cases}
0, \text{ if } \mathrm{ord}\ \varphi>0,\\
\LC(\varphi), \text{ if } \mathrm{ord}\ \varphi=0,\\
+\infty \text{ if } \mathrm{ord}\ \varphi<0 \text{ and } \LC(\varphi)>0,\\
-\infty \text{ if } \mathrm{ord}\ \varphi<0 \text{ and } \LC(\varphi)<0.
\end{cases}
$$
In $\Rw$ we consider a total order $>$ defined as follows:
 $\varphi>0$ if and only if $\LC(\varphi)>0$.
Then, the field $(\Rw,>)$ is a real closed field. That is, every positive element of $\Rw$ has a square root in $\Rw$ and any polynomial of odd degree with coefficients in $\Rw$ has at least one root in $\Rw$.  The algebraic closure of $\Rw$ is isomorphic to $\C\{\!\{\varpi\}\!\}$. For further details, we refer the reader to \cite[Chapter 2]{basu2006algorithms}.

Throughout this paper, we denote by $\F$ the field $\Rw$ and by $\Fbar$ its algebraic closure. We also use $\R(\varpi)$ to denote the field of rational functions in $\varpi$ over $\R$. 

In $\F$, one may define a notion of limit analogous to the usual epsilon-delta ($\varepsilon$-$\delta$) definition in $\R$, denoted by $\F\!\lim$, where $\varepsilon,\delta$ are assumed to be in $\F$. 
Let us denote 
$$-\infty_{\F}={\F\!\lim}_{n\to +\infty} - \varpi^{-n},\ +\infty_{\F}={\F\!\lim}_{n\to +\infty}  \varpi^{-n}$$ and $\widetilde{\mathbb{F}}:=\mathbb{F}\cup \{-\infty_{\F},+\infty_{\F}\}$. Note that the sequence $\{n\}$ is not convergent in $\tilde\F$.   

\subsection{Sturm's theorem}\label{thm:sturm} 
Let $\varphi$ be a polynomial in $\F[t]$.     The Sturm sequence of the polynomial $ \varphi $ is defined as follows: 
    \( \varphi_0 := \varphi \), \( \varphi_1 := \varphi' \), 
    and for each \( i > 1 \), 
    \[ \varphi_{i+1}:=-\rem(\varphi_{i-1},\varphi_{i}),\] 
    where $\rem(\varphi_{i-1},\varphi_{i})$ is the remainder of the Euclidean division of \( \varphi_{i-1} \) by \( \varphi_i \). It is clear that
    \(\deg(\varphi_{i+1}) < \deg(\varphi_i).
    \)
Let $k$ be the first index $i$ such that $\deg \varphi_{i} =0$.  The last nonzero polynomial \( \varphi\) is then a nonzero constant.  The length $k$ of the Sturm sequence is at most the degree of $\varphi$.

Let \( \alpha \in \mathbb{F}\) be a point. Define \( v_{\varphi}(\alpha) \) as the number of sign changes in the sequence \( \varphi_0(\alpha), \varphi_1(\alpha), \ldots, \varphi_k(\alpha) \), ignoring any zero values. The following result is Sturm's theorem over the real closed field $\mathbb{F}$, see \cite[Theorem 2.50]{basu2006algorithms}.

\begin{theorem}[Sturm's theorem]\label{sturmtheorem}
Let \( \alpha < \beta \) be elements in $\F\cup\{-\infty_{\F},-\infty\}$ such that neither $\alpha$ nor $\beta$ are not roots of $\varphi$. Then the number of roots of \( \varphi \) in the interval \( (\alpha, \beta)\) in $\F$ is given by:
\[
v_{\varphi}(\alpha) - v_{\varphi}(\beta).
\]
\end{theorem}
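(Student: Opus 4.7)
The plan is to follow the classical proof of Sturm's theorem, which transfers verbatim to any real closed field such as $\F$, and then to handle the two ``infinite'' endpoints $-\infty_{\F}$ and $-\infty$ separately. The first step is to reduce to the squarefree case: if $g := \gcd(\varphi,\varphi')\neq 1$, replacing $\varphi$ by $\varphi/g$ preserves the set of roots in $\F$, and the corresponding Sturm sequence is obtained by dividing each $\varphi_i$ by $g$, a transformation that does not affect signs. One may thus assume $\varphi_k$ is a nonzero constant with no common root with any $\varphi_i$.

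Next, for $\alpha,\beta\in\F$, I would analyze the piecewise-constant function $\gamma\mapsto v_{\varphi}(\gamma)$, whose discontinuities occur only at roots of some $\varphi_i$. Two local analyses suffice. If $\gamma_0$ is a root of an intermediate $\varphi_i$ (with $1\leq i\leq k-1$) but not of $\varphi$, the recurrence $\varphi_{i-1}=q_i\varphi_i-\varphi_{i+1}$ gives $\varphi_{i-1}(\gamma_0)=-\varphi_{i+1}(\gamma_0)$, and an induction using the same recurrence forces both values to be nonzero (otherwise the vanishing would propagate along the sequence until $\varphi_k(\gamma_0)=0$, contradicting the squarefreeness reduction). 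Hence the local block $(\varphi_{i-1},\varphi_i,\varphi_{i+1})$ contributes exactly one sign change on each side of $\gamma_0$, and $v_{\varphi}$ is unchanged. If $\gamma_0$ is a root of $\varphi$ itself, then because $\varphi_1=\varphi'$, a first-order local expansion shows that $\varphi$ and $\varphi'$ have opposite signs just to the left of $\gamma_0$ and the same sign just to the right, so the pair $(\varphi_0,\varphi_1)$ loses exactly one sign change and $v_{\varphi}$ drops by one. Summing over all roots of $\varphi$ in $(\alpha,\beta)$ yields the claimed formula in this case.

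For the extended endpoints, I would define $v_{\varphi}(-\infty_{\F})$ as the common value of $v_{\varphi}(\gamma)$ for $\gamma\in\F$ less than every root in $\F$ of every $\varphi_i$; concretely, one may take $\gamma=-\varpi^{-N}$ for $N\in\N$ sufficiently large, which reduces the sign of $\varphi_i(\gamma)$ to a combinatorial expression in the leading monomials of the $\varphi_i$. Similarly, $v_{\varphi}(-\infty)$ is defined by choosing $\gamma$ to be a sufficiently negative standard real number, below every finite (order-zero) real root of every $\varphi_i$. Well-definedness of both quantities is immediate from the piecewise-constant behaviour of $v_{\varphi}$. The sweeping argument of the previous paragraph then extends without modification to intervals of the form $(-\infty_{\F},\beta)$, $(-\infty,\beta)$ and $(-\infty_{\F},-\infty)$.

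The main obstacle, and the only new content beyond the classical argument, is conceptual: $\F$ carries two natural ``negative infinities'' relative to the embedding $\R\hookrightarrow\F$, and one must verify that the root-counting argument correctly handles the intermediate region of ``infinitely negative'' elements of $\F$, i.e.\ those with negative order and negative leading coefficient, which lie strictly between $-\infty_{\F}$ and $-\infty$. Once the definitions of $v_{\varphi}(-\infty_{\F})$ and $v_{\varphi}(-\infty)$ are fixed as above, everything else is a direct translation of the real closed field version, as recorded in \cite[Chapter 2]{basu2006algorithms}.
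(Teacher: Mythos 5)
The paper does not actually prove this statement: it is quoted as background with a pointer to Theorem 2.50 of Basu--Pollack--Roy, and the one ingredient that goes beyond that citation --- the endpoint $-\infty$, i.e.\ the cut of $\R$ inside $\F$, as opposed to the field infinity $-\infty_{\F}$ --- is left implicit there. Your proposal instead reconstructs the classical sweeping proof over the real closed field $\F$ and then justifies the two infinite endpoints, which is a legitimate and more self-contained route. In particular, your treatment of $-\infty$ (pick a real $\gamma$ below the limits of all roots of all $\varphi_i$ that have finite limits; observe that $v_{\varphi}$ is constant between the cut and $\gamma$, since any root of some $\varphi_i$ lying above the cut and below $\gamma$ would have a finite limit $\leq\gamma$; and check that the resulting signs agree with the rule \eqref{eq:sign_infty} via the lowest $\varpi$-order coefficients) is exactly what is needed to upgrade the cited result to the form used in the paper, e.g.\ for the interval $(-\infty_{\F},-\infty)$ in Theorem \ref{thm:lower-bounded}. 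So the content is right and complementary to the paper's bare citation.

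One inaccuracy to fix: the square-free reduction as you state it is not literally true. The Sturm sequence of $\varphi/g$, with $g=\gcd(\varphi,\varphi')$, is not obtained by dividing each $\varphi_i$ by $g$; its second term is $(\varphi/g)'$, whereas division gives $\varphi'/g$. The correct statement is that $(\varphi_i/g)_i$ is a generalized (Sturm-like) sequence whose sign variations agree with those of $(\varphi_i)_i$ wherever $g\neq 0$, and whose second term has the same sign as $(\varphi/g)'$ at each root of $\varphi/g$; the sweep argument then applies to this sequence. The slip is harmless in the present context, because the paper only applies the theorem to square-free $\varphi$ (square-free parts are extracted in Algorithms \ref{alg:compute_phi_pa} and \ref{alg:testTgood}), and the paper's own description of the Sturm sequence --- whose last term is a nonzero constant --- already presupposes square-freeness; still, as written your reduction step would need this repair to stand on its own.
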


\section{Properties of non-critical tangency value polynomials}\label{sec:tangency}
Let $p$  be a non-constant polynomial in $\R[x]$. This section presents the concept of the $n$-th non-critical tangency value polynomial of $p$ at a given point $a\in\Fbar^n$, where $\Fbar$ is the algebraic closure of $\F=\Rw$, 
and discusses its main properties. 

\subsection{Tangency points and values over $\Fbar$}
The following $n$ polynomials, associated with the partial derivatives of $p$ and the point $a$, will be used frequently throughout this work:
\begin{equation}\label{eq:Phi_i}
\Phi_i(x,a)=(x_i-a_i){\partial p}/{\partial x_n}-(x_n-a_n){\partial p}/{\partial x_i}, \ i=1,\dots,n-1,
\end{equation}
and
\begin{equation}\label{eq:Phi_n}
\Phi_n(x,a):=\varpi \big((x_1-a_1)^2+\ldots+(x_n-a_n)^2\big)-1.
\end{equation}

The set of \textit{tangency points} over $\Fbar$ 
of $p$ at $a\in\Fbar^n$  is defined as the following subset of $\Fbar^n$:
\begin{equation}\label{eq:Gamma_ainR}
\Gamma_{\Fbar}(p,a) :=  \big\{x \in\Fbar^{n} 
\mid \Phi_i(x,a)=0,\ \forall i=1,\cdots,n\big\}.
\end{equation}


Apart from $\Gamma_{\Fbar}(p,a)$, we now define a subset of it, which will play a crucial role in the subsequent analysis.
\begin{definition}
The following set is called the \textit{$n$-th set of non-critical tangency points} of $p$ with respect to $a$:
\[\Delta_{\Fbar}(p, a):=\Gamma_{\Fbar}(p, a)\setminus \Sigma^n_{\C}(p),\]
where $\Sigma^n_{\C}(p)$ is the set of complex roots of ${\partial p}/{\partial x_n}$ -- the $n$-partial derivative of $p$. 
\end{definition}

The $i$-th set of non-critical tangency points can be defined analogously by replacing $n$ with $i$, where $i = 1, \dots, n-1$. In this study, it suffices to consider only the $n$-th set. Since the context is clear, the index $n$ is be omitted from the notation. 

Clearly, $\Delta_{\Fbar}(p, a)$ does not contain the set of complex critical points of $p$. 
This set can be expressed as the solution set of the following mixed system consisting of 
$n$ polynomial equations and one polynomial inequation:
\begin{equation}\label{eq:Delta}
\Delta_{\Fbar}(p,a) =  \big\{x \in\Fbar^{n} 
\mid  \Phi_1(x,a)=0, \cdots, \Phi_{n}(x,a)=0, {\partial p}/{\partial x_n}\neq 0\big\}.
\end{equation}



\begin{proposition}\label{prop:finite}
The image set $p\left(\Delta_{\Fbar }(p, a)\right)\subset \Fbar$ is finite.
\end{proposition}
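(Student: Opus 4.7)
The plan is to show that $\Delta_{\Fbar}(p,a)$ decomposes into finitely many irreducible components and that $p$ is constant on each of them; finiteness of the image then follows immediately.

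First I would rule out the degenerate case by proving that $x_n \neq a_n$ (equivalently, $x \neq a$) at every point of $\Delta_{\Fbar}(p,a)$. Indeed, if $x_n = a_n$, the equations $\Phi_i(x,a)=0$ for $i=1,\dots,n-1$ reduce to $(x_i - a_i)\,\partial p/\partial x_n(x) = 0$, and the inequation $\partial p/\partial x_n(x) \neq 0$ forces $x_i = a_i$ for all $i < n$, whence $x = a$; but this contradicts $\Phi_n(a,a) = -1 \neq 0$. With $x_n - a_n$ invertible on $\Delta_{\Fbar}(p,a)$, the scalar $\lambda(x) := \partial p/\partial x_n(x)/(x_n - a_n)$ is well-defined and nonzero there, and the system $\Phi_1 = \cdots = \Phi_{n-1} = 0$ is equivalent to the single parallelism identity $\nabla p(x) = \lambda(x)(x - a)$.

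Second, since $\Delta_{\Fbar}(p,a)$ is a constructible subset of $\Fbar^n$ --- the Zariski-open subset of the algebraic set $V_{\Fbar}(\Phi_1,\dots,\Phi_n)$ cut out by $\partial p/\partial x_n \neq 0$ --- it has only finitely many irreducible components $Z_1,\dots,Z_m$. I would then show that $p$ is constant on each $Z_j$ by a Zariski tangent-space argument. At a smooth point $x$ of $Z_j$ (and the smooth locus is Zariski-dense since $\Fbar$ has characteristic zero), the identity $\Phi_n \equiv 0$ on $Z_j$ gives
\[T_x Z_j \;\subset\; \ker d\Phi_n|_x \;=\; \bigl\{v \in \Fbar^n : {\textstyle \sum_{i=1}^n} (x_i - a_i)\,v_i = 0\bigr\},\]
so every tangent vector is orthogonal to $x - a$ with respect to the standard bilinear form. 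Combined with the parallelism identity obtained above, this yields
\[dp|_x(v) \;=\; \sum_{i=1}^n \frac{\partial p}{\partial x_i}(x)\,v_i \;=\; \lambda(x)\sum_{i=1}^n (x_i - a_i)\,v_i \;=\; 0\]
for every $v \in T_x Z_j$. Hence the differential of $p$ vanishes on a Zariski-dense open subset of $Z_j$, which forces $p|_{Z_j}$ to be constant.

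The main subtlety lies in working over the algebraically closed field $\Fbar = \Cw$ rather than $\C$: I rely on the fact that $\Fbar$ has characteristic zero and is algebraically closed to invoke the standard results that the smooth locus of an irreducible variety is Zariski-dense and that a regular function whose differential vanishes on a dense open subset is constant on the variety. These transfer verbatim, with $\varpi$ simply treated as a fixed element of the base field. Note that $\Delta_{\Fbar}(p,a)$ itself need not be finite --- e.g.\ when $p$ is a quadratic form with rotational symmetry about $a$, the set $\Delta_{\Fbar}(p,a)$ can be an $(n{-}1)$-dimensional sphere --- so one really needs the differential argument rather than a naive dimension count. Combining the two steps gives $p(\Delta_{\Fbar}(p,a)) = \{p(Z_1),\dots,p(Z_m)\}$, a finite set.
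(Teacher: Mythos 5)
Your proof is correct, and it takes a more self-contained route than the paper's. The paper's own argument first reduces the claim to the finiteness of $p\left(\Gamma_{\Fbar}(p,a)\right)$, then treats $\varpi$ as a nonzero real parameter so that $V_{\C}(\I_{\varpi})$ becomes the complex critical locus of $p$ on a sphere centered at $a$, invokes the proof of \cite[Proposition 3.2]{duc2025nonneg} for the fact that $p$ is constant on each of the finitely many connected components of that locus, and finally transfers the conclusion back to $\Fbar$. You instead stay entirely over the algebraically closed field $\Fbar$ and work only with $\Delta_{\Fbar}(p,a)$: the observation that $x_n\neq a_n$ there (forced by $\partial p/\partial x_n\neq 0$ together with $\Phi_n(a,a)=-1$) makes the multiplier $\lambda(x)=(\partial p/\partial x_n)/(x_n-a_n)$ explicit and turns $\Phi_1=\dots=\Phi_{n-1}=0$ into $\nabla p(x)=\lambda(x)(x-a)$; then on each of the finitely many irreducible components the inclusion $T_xZ_j\subset\ker d\Phi_n|_x$ at smooth points kills $dp|_x$, and constancy follows from the characteristic-zero fact that a regular function with vanishing differential on a dense open subset of an irreducible variety is constant. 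The underlying mechanism --- constancy of $p$ on components of the tangency locus, forced by the Lagrange-type identity --- is the same in both proofs, but your version avoids the citation and the somewhat informal specialization step from $\C$ (with $\varpi$ a real parameter) to the Puiseux setting, at the mild cost of invoking standard facts about smooth loci and differentials over $\Fbar$; your remark that $\Delta_{\Fbar}(p,a)$ may be positive-dimensional, so a naive dimension count cannot replace the differential argument, is also well taken.
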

\begin{proof} Since $\Delta_{\Fbar}(p,a)\subset \Gamma_{\Fbar}(p,a)$, the finiteness of $p(\Delta_{\Fbar}(p,a))$ is an immediate consequence of the finiteness of $p(\Gamma_{\Fbar}(p,a))$. Thus, it remains to establish the finiteness of $p(\Delta_{\Fbar}(p,a))$.

We consider $\varpi$ as a non-zero real parameter. Let $\I_{\varpi}$ be the ideal $\I$ in $\R[x]$ defined as follows:
\begin{equation}\label{eq:Iw}
    \I_{\varpi}:=\big\langle \Phi_1(x,a), \, \dots, \, \Phi_{n-1}(x,a), \, \Phi_n(x,a) \big\rangle.
\end{equation}
From the definitions of $\Phi_i$'s in \eqref{eq:Phi_i} and \eqref{eq:Phi_n}, $V_{\C}(\I_{{\varpi}})$ is the set of complex critical points of $p$ over the sphere centered at $a$, which is considered as a complex vector with parameter $\varpi$, with radius $1/w$. This set has finitely connected components, and according to the proof of
\cite[Proposition 3.2]{duc2025nonneg}, $p$ is constant in each of its connected component. Hence, the image set $p(V_{\C}(\I_{{\varpi}}))$ is finite (in $\Fbar$). It follows from \eqref{eq:Gamma_ainR} that if we substitute $ V_{\C}(\I_{\varpi})=\Gamma_{\Fbar}(p,a)$, then the set $p(\Gamma_{\Fbar}(p,a))\subset \Fbar$ is finite. 
\end{proof}

\begin{remark}
It follows from the proof of Proposition \ref{prop:finite} that
the image set $p\left(\Gamma_{\Fbar }(p, a)\right)$ is finite.
\end{remark}

\begin{example}\label{ex:x_y0}
Consider the bivariate polynomial $p=x_1-x_2$ and $a=(0,0)$. Since $\partial p/\partial x_2=-2$, $\Gamma_{\Fbar }(p, a)$ coincides with $\Delta_{\Fbar }(p, a)$, and  \eqref{eq:Gamma_ainR} becomes
\[\begin{array}{cl}
\Gamma_{\Fbar}(p,a)  & = \  \{x \in\Fbar^{n} \mid x_1+x_2=0, w (x_1^2+x_2^2)-1=0\} \medskip \\
     &= \ \{x \in\Fbar^{n} \mid x_1=-x_2=\pm\frac{1}{\sqrt{2}}\varpi^{-1/2}\}.
\end{array}\]
Therefore, $p\left(\Gamma_{\Fbar }(p, a)\right)=\{\pm\sqrt{2}\varpi^{-1/2}\}$ is finite.
\end{example}

In the following, we introduce some notations concerning the sets of tangency points in $\F^n$ and $\Fbar^n$:
\[\Gamma_{\F}(p, a):=\F^n\cap\,\Gamma_{\Fbar}(p, a) \text{ and } \, \Delta_{\F}(p, a):=\F^n\cap\,\Delta_{\Fbar}(p, a).\]

The following property clarifies the importance of $\Delta_{\F}(p, a)$, and hence of $\Delta_{\Fbar}(p, a)$, in analyzing the behavior of $p$ at $-\infty$.

\begin{lemma}\label{lm:infty} Let $x$ be in $\Gamma_{\F}(p, a)$. If
    $\lim_{\varpi\to 0} p(x)=-\infty$, then $x$ lies in $\Delta_{\F}(p, a)$.
\end{lemma}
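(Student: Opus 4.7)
\textit{Plan.} I would prove the lemma by contrapositive. Suppose $x\in\Gamma_\F(p,a)\setminus\Delta_\F(p,a)$; by the explicit description \eqref{eq:Delta} this is equivalent to $\partial p/\partial x_n(x)=0$. The goal is to show that $\lim_{\varpi\to 0}p(x)\neq -\infty$.

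The first step is to combine the condition $\partial p/\partial x_n(x)=0$ with the defining equations $\Phi_i(x,a)=0$ for $i=1,\dots,n-1$ from \eqref{eq:Phi_i}. Direct substitution yields $(x_n-a_n)\,\partial p/\partial x_i(x)=0$ for each $i<n$, so the argument naturally splits along whether $x_n\neq a_n$ or $x_n=a_n$.

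In the case $x_n\neq a_n$, the relations above force $\partial p/\partial x_i(x)=0$ for every $i$, hence $\nabla p(x)=0$ and $x$ lies on the complex critical locus $\{\nabla p=0\}\subset\Fbar^n$. A standard fact is that $p$ is constant along each irreducible component of this locus, with value in $\C$. Since $p(x)\in\F$ and $\F\cap\C=\R$, the value $p(x)$ is a real number independent of $\varpi$, so $\lim_{\varpi\to 0}p(x)=p(x)\in\R$, which is finite and thus not $-\infty$.

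The case $x_n=a_n$ is where I expect the main obstacle to lie. Here the equations $\Phi_i=0$ for $i<n$ are automatically satisfied, $\Phi_n=0$ constrains $x$ to the $\F$-sphere of radius $\varpi^{-1/2}$ in the hyperplane $x_n=a_n$, and the extra condition $\partial p/\partial x_n(x_1,\dots,x_{n-1},a_n)=0$ must hold. To rule out $\lim_{\varpi\to 0}p(x)=-\infty$, I would invoke Proposition~\ref{prop:finite} and the fact, established in its proof, that $p$ is constant on each connected component of $V_\C(\I_\varpi)$. It then suffices to show that no connected component lying entirely inside $\{\partial p/\partial x_n=0\}$ produces a constant Puiseux series of negative order with negative leading coefficient. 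I expect that completing this step will rely either on an (implicit) genericity condition on $a$ -- under which the polynomial equation $\partial p/\partial x_n(x',a_n)=0$ fails to meet the infinite-radius sphere, making this case vacuous -- or on a direct component-wise asymptotic analysis of the $\varpi$-parametrised variety $V_\C(\I_\varpi)$, arguing that any degenerate component along which $\partial p/\partial x_n$ vanishes identically must produce a value of $p$ bounded below as $\varpi\to 0$.
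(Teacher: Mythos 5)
Your Case~1 ($x_n\neq a_n$: the relations force $\nabla p(x)=0$, $p(x)$ is a complex critical value lying in $\F\cap\C=\R$, hence finite) is correct, but the obstacle you flag in Case~2 ($x_n=a_n$) is a genuine gap, and under the reading of the definition you adopt it cannot be closed. If one takes \eqref{eq:Delta} literally, so that ``$x\notin\Delta_\F(p,a)$'' means $\partial p/\partial x_n(x)=0$ as an element of $\F$, the statement is simply false for special $a$: take $p=x_2^2-x_1^2$, $a=(0,0)$ and $x=(\varpi^{-1/2},0)$. Then $\Phi_1(x,a)=4x_1x_2=0$ and $\Phi_2(x,a)=\varpi(x_1^2+x_2^2)-1=0$, so $x\in\Gamma_\F(p,a)$, and $p(x)=-\varpi^{-1}$ gives $\lim_{\varpi\to 0}p(x)=-\infty$, yet $\partial p/\partial x_2(x)=2x_2=0$. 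So your suspicion that this case needs a genericity hypothesis on $a$ (under which the bad intersection is empty) is well founded; no component-wise asymptotic argument can rescue it for arbitrary $a$.

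The paper's own proof avoids the case split entirely because it works with the original set-theoretic definition $\Delta_{\Fbar}(p,a)=\Gamma_{\Fbar}(p,a)\setminus\Sigma^n_{\C}(p)$, where $\Sigma^n_{\C}(p)$ is the set of \emph{complex} roots of $\partial p/\partial x_n$, i.e.\ points of $\C^n$ (constant Puiseux coordinates). Under that reading, $x\in\Gamma_\F(p,a)\setminus\Delta_\F(p,a)$ forces $x\in\Sigma^n_{\C}(p)\cap\F^n\subset\R^n$, so $p(x)$ is a real number independent of $\varpi$, which immediately contradicts $\lim_{\varpi\to 0}p(x)=-\infty$; this is the paper's two-line argument. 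The discrepancy you ran into is thus a mismatch between the two descriptions of $\Delta_{\Fbar}(p,a)$ in the paper: the set in \eqref{eq:Delta} is in general strictly smaller than $\Gamma_{\Fbar}(p,a)\setminus\Sigma^n_{\C}(p)$ (they agree only for generic $a$), and it is precisely on the difference — points of $\Gamma$ with $x_n=a_n$ and $\partial p/\partial x_n(x)=0$ but non-constant coordinates — that the lemma can fail. To repair your write-up you should either switch to the paper's definition, in which case your contrapositive reduces to the observation that the excluded points are real constant points, or keep \eqref{eq:Delta} and add the genericity assumption on $a$ explicitly.
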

\begin{proof}
Assume that $\lim_{\varpi \to 0} p(x) = -\infty$, but, on the contrary, $x \notin \Delta_{\F}(p, a)$.
It then follows from the definition of $\Delta_{\F}(p, a)$ that $x \in \Sigma^n_{\R}(p)$, and hence $p(x)$ is a real number independent of $\varpi$.
This leads to a contradiction.
\end{proof}

We now discuss the relationship of the set of asymptotic tangency values and $\Gamma_{\F}(p, a)$ - the set of tangency points in $\F$.
 
\begin{lemma}
    \label{prop:asym_tangency_values}
Let $a$ be a given point in $\R^n$. Then, the following identity holds
$$T_{\infty}(p,a)=\Big\{\lim_{\varpi\to 0} p(x)\mid x\in \Gamma_{\F}(p, a)\Big\}.$$
\end{lemma}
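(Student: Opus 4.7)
The plan is to prove the two inclusions of the claimed identity separately.

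For the inclusion $\supseteq$, I would take an arbitrary $x \in \Gamma_{\F}(p, a)$ and view each coordinate as a Puiseux series in $\varpi$ with real coefficients. Because the equations $\Phi_1(x,a) = \cdots = \Phi_n(x,a) = 0$ force $x$ to be algebraic over $\R(\varpi)$, the series $x_i(\varpi)$ converge on a punctured neighbourhood of $0$, producing a point $x(\varpi_0) \in \R^n$ for each small real $\varpi_0 > 0$. Substitution preserves $\Phi_1 = \cdots = \Phi_{n-1} = 0$, so $\nabla p(x(\varpi_0))$ is parallel to $x(\varpi_0) - a$ and hence $x(\varpi_0) \in \Gamma_{\R}(p, a)$; substitution into $\Phi_n = 0$ gives $\|x(\varpi_0) - a\|^2 = 1/\varpi_0$, forcing $\|x(\varpi_0)\| \to \infty$ as $\varpi_0 \searrow 0$. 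Picking any real sequence $\varpi_k \searrow 0$ then exhibits $\lim_{\varpi \to 0} p(x)$ as a limit of $p$ along a sequence in $\Gamma_{\R}(p, a)$ going to infinity, hence as an element of $T_{\infty}(p, a)$.

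For the reverse inclusion $\subseteq$, I would fix $\lambda \in T_{\infty}(p, a)$ together with a witnessing sequence $x_k \in \Gamma_{\R}(p, a)$ with $\|x_k\| \to \infty$ and $p(x_k) \to \lambda$. Since $\Gamma_{\R}(p, a)$ is semi-algebraic and unbounded, the curve selection lemma at infinity produces a real analytic Puiseux arc $\gamma\colon (0, \varepsilon) \to \Gamma_{\R}(p, a)$ with $\|\gamma(t) - a\| \to \infty$ and $p(\gamma(t)) \to \lambda$ as $t \to 0^+$. The function $h(t) := \|\gamma(t) - a\|^2$ is then a Puiseux series of strictly negative order and positive leading coefficient, so $s = 1/h(t)$ inverts to a Puiseux series $t = t(s)$ with $t(0) = 0$; the reparametrized vector $\tilde\gamma(s) := \gamma(t(s))$ satisfies $\|\tilde\gamma(s) - a\|^2 = 1/s$ identically, hence $\Phi_n(\tilde\gamma, a) = 0$. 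The remaining equations $\Phi_i(\tilde\gamma, a) = 0$ for $i < n$ are inherited from $\gamma$ because reparametrization does not affect these identities. Interpreting $s$ as the infinitesimal $\varpi$ then exhibits $\tilde\gamma$ as an element of $\Gamma_{\F}(p, a)$ with $\lim_{\varpi \to 0} p(\tilde\gamma) = \lambda$.

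I expect the only delicate step to be this reparametrization. The arc produced by curve selection satisfies $\Phi_1 = \cdots = \Phi_{n-1} = 0$ identically but only satisfies $\|\gamma - a\| \to \infty$ asymptotically, whereas membership in $\Gamma_{\F}(p, a)$ requires the strict algebraic identity $\|\tilde\gamma - a\|^2 = 1/\varpi$ encoded in $\Phi_n$. Inverting the Puiseux series $h(t)$ and substituting back is routine but must be performed carefully enough that the resulting object is an honest vector in $\F^n$; everything else reduces either to direct substitution in the Puiseux field or to a standard application of the semi-algebraic curve selection lemma at infinity.
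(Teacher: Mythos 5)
Your argument for the inclusion $\subseteq$ is essentially the paper's own proof: apply the curve selection lemma at infinity to a witnessing sequence in $\Gamma_{\R}(p,a)$, then reparametrize the resulting arc by solving $\|\gamma - a\|^2 = 1/\varpi$ so that $\Phi_n$ holds identically, which produces the desired point of $\Gamma_{\F}(p,a)$; the only cosmetic difference is that you parametrize by $t \to 0^+$ where the paper uses $\tau \to \infty$. You additionally write out the inclusion $\supseteq$, which the paper dismisses as obvious; the one leap there is your claim that vanishing of $\Phi_1,\dots,\Phi_{n-1}$ at the specialized real point forces $\nabla p$ to be parallel to $x-a$ — this fails on the locus where $\partial p/\partial x_n = 0$ and $x_n = a_n$ (so $\Gamma_{\Fbar}(p,a)$ can strictly contain the genuine tangency correspondence there), but this caveat is inherited from the paper's definition of $\Gamma_{\Fbar}(p,a)$ via those $n-1$ minors and affects its "obvious" direction equally, and it is harmless for the points $a$ (generic, T-good) where the lemma is later used.
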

\begin{proof}
We only need to prove the inclusion $\subseteq$,
as the inverse is obvious. Assume that $\lambda\in T_{\infty}(p,a)$. Then there exists a sequence $x_k\in \Gamma_{\F}(p, a)$ such that $\|x_k\|\to\infty$ and $p(x_k)\to \lambda$. By a version at infinity of the curve selection lemma \cite{nemethi1992milnor}, there exists a Nash curve $\varphi(\tau)$ in $\R^n$ of form
$$\varphi(\tau)=\sum_{k\leq k_0} a_k \tau^k \text{ with } a_k\in \R^n, a_{k_0}\neq 0,$$
such that $\Phi_i(\varphi(\tau))=0,$ for all $i=1,\ldots,n$ and 
$ \varphi(\tau)\to \infty$ and $p(\varphi(\tau))\to \lambda $  as $\tau\to \infty$.
We see that $k_0>0$ because $\varphi(\tau)\to \infty \text{ as }\tau\to \infty$.
Solving the equation $\varpi=\|\varphi(\tau)-a\|^2$, we obtain a series $\tau=\phi(\varpi)$ in $\F$ satisfying $$\varpi=\|\varphi(\phi(\varpi))-a\|^2 \text{ and } \lim_{\varpi\to 0} \|\phi(\varpi)\|=+\infty.$$ Then, the element $x=\varphi(\phi(\varpi))$ lies in $\Gamma_{\F}(p, a)$ and satisfies $\lim_{\varpi\to 0} p(x)=\lambda$.
\end{proof}

The relationship of $\Delta_{\F}(p, a)$ and the lower-boundedness of $p$ is stated in the following proposition, which will be used in the proof of the main result -- Theorem~\ref{thm:lower-bounded}.
 
\begin{proposition}
    \label{prop:tangency_ord}
Let $a$ be in $\R^n$. The following statements are equivalent:
\begin{itemize}
    \item[\rm(i)] $p$ is lower-bounded;
     \item[\rm(ii)] For all $x\in \Delta_{\F}(p, a)$, the limit $\lim_{\varpi\to 0} p(x)$ belongs to $\R\cup\{+\infty\}$.
\end{itemize}
\end{proposition}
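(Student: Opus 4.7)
The plan is to chain together Theorem \ref{thm:pham_tangency}, Lemma \ref{prop:asym_tangency_values}, and Lemma \ref{lm:infty} in order to translate the analytic condition (i) into the Puiseux-series condition (ii). Roughly, (i) will be restated as a condition on the limits $\lim_{\varpi\to 0} p(x)$ over $\Gamma_{\F}(p,a)$, and then Lemma \ref{lm:infty} will be used to shrink the index set from $\Gamma_{\F}(p,a)$ to $\Delta_{\F}(p,a)$ without losing any information.

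First I would apply Theorem \ref{thm:pham_tangency}: $p$ is lower-bounded if and only if $\min T_{\infty}(p,a)>-\infty$. Since $T_{\infty}(p,a)$ is a finite subset of $\widetilde{\R}$, this inequality is equivalent to the statement that $-\infty\notin T_{\infty}(p,a)$. Invoking Lemma \ref{prop:asym_tangency_values} to rewrite $T_{\infty}(p,a)=\{\lim_{\varpi\to 0} p(x)\mid x\in\Gamma_{\F}(p,a)\}$, condition (i) then becomes: for every $x\in\Gamma_{\F}(p,a)$, $\lim_{\varpi\to 0} p(x)\in\R\cup\{+\infty\}$. The remaining step is to show that this condition on $\Gamma_{\F}(p,a)$ is equivalent to the same condition restricted to $\Delta_{\F}(p,a)$. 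The forward implication is immediate from $\Delta_{\F}(p,a)\subseteq\Gamma_{\F}(p,a)$. For the converse, if $x\in\Gamma_{\F}(p,a)\setminus\Delta_{\F}(p,a)$, then the contrapositive of Lemma \ref{lm:infty} forces $\lim_{\varpi\to 0} p(x)\neq-\infty$, so the values of the limit on this complement are automatically in $\R\cup\{+\infty\}$ and contribute nothing extra to the required condition.

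I do not anticipate a substantive obstacle: all of the non-trivial content (finiteness of $T_{\infty}(p,a)$, its realisation as limits along $\Gamma_{\F}(p,a)$, and the fact that critical-at-$\partial p/\partial x_n$ tangency points cannot produce the limit $-\infty$) is already packaged in the three cited results. The only point that requires mild attention is the passage from "$\min T_{\infty}(p,a)>-\infty$" to "$-\infty\notin T_{\infty}(p,a)$", which is justified by the finiteness of $T_{\infty}(p,a)$.
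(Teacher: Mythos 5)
Your proposal is correct and follows essentially the same route as the paper's own proof: both reduce (i) via Theorem \ref{thm:pham_tangency} and Lemma \ref{prop:asym_tangency_values} to the statement that $\lim_{\varpi\to 0}p(x)\in\R\cup\{+\infty\}$ for all $x\in\Gamma_{\F}(p,a)$, and then pass between $\Gamma_{\F}(p,a)$ and $\Delta_{\F}(p,a)$ using Lemma \ref{lm:infty} (the paper phrases this step as a contradiction, you as a contrapositive, which is the same argument). Your explicit remark that finiteness of $T_{\infty}(p,a)$ justifies replacing $\min T_{\infty}(p,a)>-\infty$ by $-\infty\notin T_{\infty}(p,a)$ is a harmless extra precision.
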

\begin{proof} Assume that $\rm(i)$ holds. 
It follows from Theorem \ref{thm:pham_tangency} and Lemma \ref{prop:asym_tangency_values} that $\rm(i)$ is equivalent to the following statement:
$\lim_{\varpi\to 0} p(x)$ belongs to $\R\cup\{+\infty\}$ for all $x\in \Gamma_{\F}(p, a)$. Consequently, $\rm(i)$ implies $\rm(ii)$.

It suffices to show that $\rm(ii)$ implies $\rm(i)$. Indeed, assume that $\rm(ii)$ holds, but, on the contrary, $-\infty\in T_{\infty}(p,a)$. There exist $x \in \Gamma_{\F}(p, a)$ such that  $\lim_{\varpi\to 0} p(x)=-\infty$. According to Lemma \ref{lm:infty}, $x$ belongs to $\Delta_{\F}(p, a)$. This yields a contradiction.
\end{proof}

\subsection{Non-critical tangency value polynomials}
Recall that a polynomial $\varphi$ in $\R(\varpi)[t]$ 
is square-free if $\gcd(\varphi,\varphi')=1$, i.e.  the greatest common divisor of $\varphi$ and its derivative is $1$. A polynomial $\theta$ in $\R(\varpi)[t]$ is the square-free part of 
$\varphi$ if $\varphi=\theta\times \gcd(\varphi,\varphi')$.

By Proposition~\ref{prop:finite}, one can define a univariate polynomial in $\Fbar[t]$ whose roots coincide with the set $p(\Delta_{\Fbar}(p,a))$. The following proposition strengthens this result by showing that the coefficients of such a polynomial actually lie in $\R(\varpi)$.

\begin{proposition}\label{prop:exist} Let $a$ be in $\R^n$. There exists a unique, up to multiplication by a nonzero scalar in $\R(\varpi)$, square-free univariate polynomial in $\R(\varpi)[t]$ whose root set coincides with the image set $p(\Delta_{\Fbar}(p,a))$.
\end{proposition}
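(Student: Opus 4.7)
The plan is to build $\varphi$ by elimination over the base field $\R(\varpi)$ and then pass to its square-free part. Working in the polynomial ring $\R(\varpi)[x_1,\dots,x_n]$, let
\[ J := \I_\varpi : \bigl(\partial p/\partial x_n\bigr)^\infty \]
be the saturation of the ideal $\I_\varpi$ from \eqref{eq:Iw}. Re-reading the argument of Proposition~\ref{prop:finite} with $\R(\varpi)[x]$ in place of $\R[x]$, the variety $V_{\Fbar}(\I_\varpi)$ coincides with the finite set $\Gamma_{\Fbar}(p,a)$. Since $V_{\Fbar}(\I_\varpi)$ is finite, saturating by $\partial p/\partial x_n$ simply removes those points at which that derivative vanishes, so $V_{\Fbar}(J) = \Delta_{\Fbar}(p,a)$.

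Next, I would introduce a new variable $t$ and form the elimination ideal
\[ J' := \bigl( J + \langle t - p(x) \rangle \bigr) \cap \R(\varpi)[t]. \]
Since $\R(\varpi)[t]$ is a principal ideal domain, $J' = \langle \psi(t) \rangle$ for some polynomial $\psi$. By the closure theorem over the algebraically closed field $\Fbar$, the zero set of $\psi$ in $\Fbar$ equals the Zariski closure of the projection of $V_{\Fbar}\bigl( J + \langle t - p(x) \rangle \bigr)$ onto the $t$-axis, which is precisely $p(\Delta_{\Fbar}(p,a))$. Because Proposition~\ref{prop:finite} guarantees this image is finite, it is already Zariski closed, so the roots of $\psi$ in $\Fbar$ coincide with $p(\Delta_{\Fbar}(p,a))$; the same finiteness ensures $\psi \neq 0$.

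Setting $\varphi := \psi/\gcd(\psi,\psi')$ yields the desired square-free polynomial. Since $\R(\varpi)[t]$ is Euclidean, the gcd computation stays within $\R(\varpi)[t]$, so $\varphi \in \R(\varpi)[t]$. By construction, $\varphi$ is square-free and its root set equals $p(\Delta_{\Fbar}(p,a))$. For uniqueness, any two such polynomials in $\R(\varpi)[t]$ factor over $\Fbar$ as a constant times $\prod_{\lambda \in p(\Delta_{\Fbar}(p,a))}(t-\lambda)$, hence differ by a nonzero scalar of $\Fbar$; this scalar must lie in $\R(\varpi)$ because both polynomials do.

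The main obstacle is a clean verification of the identification $V_{\Fbar}(J) = \Delta_{\Fbar}(p,a)$, since the saturation operation has to interact correctly with the (possibly non-reduced) scheme structure of $V_{\Fbar}(\I_\varpi)$; fortunately the finiteness of $\Gamma_{\Fbar}(p,a)$ established in Proposition~\ref{prop:finite} reduces this to a pointwise check. Once this is in hand, the closure theorem together with the PID structure of $\R(\varpi)[t]$ delivers the polynomial $\varphi$ with essentially no further work.
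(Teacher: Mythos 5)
Your construction fails at the step where you claim that $V_{\Fbar}(\I_{\varpi})=\Gamma_{\Fbar}(p,a)$ is \emph{finite} and hence that saturation gives $V_{\Fbar}(J)=\Delta_{\Fbar}(p,a)$ exactly. Proposition \ref{prop:finite} does not say this: it asserts only that the image $p(\Delta_{\Fbar}(p,a))$ is finite (its proof shows $p$ is constant on each of the finitely many connected components of the variety, not that the variety itself is finite). Finiteness of the tangency set genuinely fails for some inputs covered by Proposition \ref{prop:exist}, which is stated for \emph{every} $a\in\R^n$, not only T-good points: for $p=x_1^2+x_2^2$ and $a=(0,0)$ one has $\Phi_1\equiv 0$, so $\Gamma_{\Fbar}(p,a)$ is the whole one-dimensional sphere $\varpi(x_1^2+x_2^2)=1$, and $\Delta_{\Fbar}(p,a)$ is that sphere minus the two points where $x_2=0$ --- an infinite set that is not Zariski closed. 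In such cases saturation gives $V_{\Fbar}(J)=\overline{\Delta_{\Fbar}(p,a)}$ (the Zariski closure), which strictly contains $\Delta_{\Fbar}(p,a)$, so the ``pointwise check'' you appeal to in your closing paragraph does not exist, and your identification of the $t$-projection with $p(\Delta_{\Fbar}(p,a))$ is unjustified as written.

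The gap is repairable, and cheaply: what you actually need is $p\bigl(V_{\Fbar}(J)\bigr)=p\bigl(\overline{\Delta_{\Fbar}(p,a)}\bigr)$, and since a polynomial map sends the closure of a set into the closure of its image, $p\bigl(\overline{\Delta_{\Fbar}(p,a)}\bigr)\subseteq \overline{p(\Delta_{\Fbar}(p,a))}=p(\Delta_{\Fbar}(p,a))$, the last equality because $p(\Delta_{\Fbar}(p,a))$ is finite (this is the real content of Proposition \ref{prop:finite}) and finite sets are Zariski closed; the reverse inclusion is trivial. With that substitution the closure theorem, the PID argument, the square-free part, and your uniqueness paragraph (which is fine, and more explicit than the paper's) all go through. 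After the fix, your route differs from the paper's only in encoding the inequation $\partial p/\partial x_n\neq 0$ by saturation instead of the Rabinowitsch variable $s$ with $s\cdot\partial p/\partial x_n-1$ as in \eqref{eq:Idelta}; both are standard, and both ultimately need the finiteness of the image to pass from the closure of the projection to the projection itself.
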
 

\begin{definition}\label{def:tangency_pol} 
The square-free univariate polynomial in $\R(\varpi)[t]$ mentioned in Proposition \ref{prop:exist} is called
the \textit{$n$-th non-critical tangency value polynomial} of $p$ at $a$.
\end{definition}

\textit{Proof of Proposition \ref{prop:exist}} We enlarge the polynomial ring by introducing auxiliary variables $t$ and $s$. 
Let $\I$ denote the ideal in $\R(\varpi)[s,x,t]$ defined as follows:
\begin{equation}\label{eq:Idelta}
    \I:=\big\langle p(x)-t, \, \Phi_1(x,a), \, \dots, \, \Phi_n(x,a), \, s\times\partial p/\partial x_n -1 \big\rangle.
\end{equation}
According to Hilbert’s basis theorem, there exists a finite list $B$ of generators of the ideal $\I\cap \R(\varpi)[t]$. 
Since $\R(\varpi)[t]$ is a principal ideal domain, $ \langle B \rangle$ is principal, there exists a polynomial $\bar\varphi \in \mathbb{R}(\varpi)[t]$ such that 
$\langle \bar\varphi\rangle=\I\cap \R(\varpi)[t]$.
Let $\varphi$ be the square-free part of $\bar{\varphi}$. 

We prove that the root set of $\varphi$
coincides with $p(\Delta_{\Fbar}(p,a))$.
Assume that $\lambda\in\Fbar$ is a root of $\varphi$. By the definition of $\varphi$, $\lambda$ is also a root of $\bar{\varphi}$. 
Then, there exist $s\in \Fbar$ and $\bar x\in\Fbar^{n}$ such that $\lambda=p(\bar x)$ and $\bar x$ is a root of the following polynomial system
\[\big\{\Phi_1(x,a), \, \dots, \, \Phi_n(x,a), \, s\times\partial p/\partial x_n -1 \big\}.\]
The identity $s\times\partial p/\partial x_n (\bar x) -1=0$ implies that $\partial p/\partial x_n(\bar x)\neq 0$. It follows that $\bar x\in\Delta_{\Fbar}(p,a)$, and that $\lambda$ is an element of $p(\Delta_{\Fbar}(p,a))$. 
The converse inclusion, namely that each value in $p(\Delta_{\Fbar}(p,a))$ is a root of $\varphi$, is established by an analogous argument. \qed


\begin{remark} The preceding notion can be naturally extended to the case $a$ in $\F^n$. Repeating the proof of Proposition \ref{prop:exist} shows that there exists a unique square-free polynomial in $\F[t]$, up to multiplication by a nonzero scalar in $\F$, whose root set is $p(\Delta_{\Fbar}(p,a))$. This polynomial is also denoted by $\varphi_{p,a}$.
\end{remark}


\subsection{Computing $\varphi_{p,a}$}
Guided by the arguments in the proof of Proposition~\ref{prop:exist}, we design Algorithm~\ref{alg:compute_phi_pa} for computing
the polynomial $\varphi_{p,a}$ using Gr\"{o}bner basis computations.



\begin{algorithm}\caption{Computing $\varphi_{p,a}$ -- the $n$-th non-critical tangency value polynomial}\label{alg:compute_phi_pa}
    \smallskip
\begin{flushleft}
    \textbf{Input:} $p$ in $\R[x]$ and $a\in\R^n$
\smallskip

    \textbf{Output:} $\varphi_{p,a}$
\end{flushleft}
    \begin{itemize}
\item [\rm 1:] Compute $G$ -- the reduced Gr\"{o}bner basis of $\I$ given in \eqref{eq:Idelta} w.r.t the pure lexicographic monomial order  $s>x> t$ in $\R(\varpi)[s,x,t]$
\item [\rm 2:] Let $\{\bar{\varphi}\}=G \cap \R(\varpi)[t]$ 
and compute $\varphi$ -- the square-free part of $\bar{\varphi}$
\item [\rm 3:] Return $\varphi$
    \end{itemize}
\end{algorithm}

\textit{Description}.
The input of Algorithm \ref{alg:compute_phi_pa} includes
$p$ in $\R[x]$ and $a\in\R^n$. The output is the $n$-th non-critical tangency value polynomial of $p$ at $a$. Step~1 computes the the reduced Gr\"{o}bner basis $G$ of $\I$, w.r.t the pure
    lexicographic monomial order  $>$, in the ring $\R(\varpi)[s,x,t]$. 
    Step~2 guarantees that the resulting polynomial 
$\varphi$ is square-free.

\textit{Correctness}. The correctness of Algorithm~\ref{alg:compute_phi_pa} is guaranteed by the arguments in the proof of Proposition~\ref{prop:exist}: After Step 2, $\varphi$ is a square-free univariate polynomial in $\R(\varpi)[t]$ whose root set coincides with the image set $p(\Delta_{\Fbar}(p,a))$.

\begin{example}\label{ex:xy_1}
Consider the bivarite polynomial $q=(x_1x_2-1)^2 + x_2^2$ and $a=(0,0)$. One computes the reduced Gr\"{o}bner basis $G$ of the following ideal with respect to the pure
lexicographic monomial order $s>x_1>x_2>t$ in the ring $\R(\varpi)[s,x_1,x_2,t]$:
$$\I=\big\langle q -t , x_1{\partial q}/{\partial x_2}-x_2{\partial q}/{\partial x_1}, \varpi (x_1^2+x_2^2)-1, s\times{\partial q}/{\partial x_2}-1\big\rangle,$$
where $t,s$ are new variables as in Step 1. One then obtains  $\bar{\varphi}$ in Step 2 as follows:
\[\begin{array}{r}
    \bar{\varphi} \ = \ 16\varpi^5t^4-\big(40\varpi^5  + 32 \varpi^4  + 8 \varpi^3\big)t^3 +
\big(49 \varpi^5  + 60 \varpi^4  + 14 \varpi^3  + 12 \varpi^2  + \varpi\big)t^2  \\
     - \big(42 \varpi^5 + 49 \varpi^4  + 26 \varpi^3  - 2 \varpi^2  + 4 w + 1\big)t 
+ \big( 17 \varpi^5  + 21 \varpi^4  - 5 \varpi^3  + 3 \varpi^2  + \varpi\big). \
\end{array}\]
This polynomial is square-free. It follows that $\varphi\equiv\bar{\varphi}$ and this is the tangency value polynomial of $q$ at $(0,0)$.
\end{example}

\section{The property of being T-good}\label{sec:Tgoodness}

\subsection{Definition and properties} This subsection introduces a new notion concerning the relationship between the cardinalities of   $p\left(\Delta_{\Fbar }(p, a)\right)$ and $\Delta_{\Fbar }(p, a)$. 

\begin{definition}\label{T-good}
    An element $a\in \Fbar ^n$ is called \emph{T-good} for the polynomial $p$ if 
\begin{equation}\label{eq:def_Tgood}
    \sharp\ p\left(\Delta_{\Fbar }(p, a)\right)=\sharp\ \Delta_{\Fbar }(p, a).
\end{equation}
\end{definition}

As the set $p\left(\Delta_{\Fbar }(p, a)\right)$ is finite, 
the T-goodness of $a$ implies that the set $\Delta_{\Fbar }(p, a)$ is finite as well, and that $p$ is injective when restricted to $\Delta_{\Fbar }(p, a)$. The genericity of the T-good property will be demonstrated in Theorem \ref{thm:generic-tangency}.

\begin{example}\label{ex:x_y1}
Consider the polynomial $p$ and point $a$ given in Example \ref{ex:x_y0}. It follows from the results of that example that
the two sets $p\left(\Gamma_{\Fbar }(p, a)\right)$ and $\Gamma_{\Fbar }(p, a)$ have the same number of elements. Hence, we conclude that $a$ is T-good for $p$. 
\end{example}

The following proposition establishes a fundamental property of T-goodness: the image of $\Gamma_{\F}(p,a)$ under $p$ coincides with the $n$-th set of non-critical tangency values of $p$ in $\F$.

\begin{proposition}\label{prop:real_values_F} Assume that $a\in \Fbar^n$ is T-good for the polynomial $p\in\R[x]$. The following equality holds:
\begin{equation}\label{eq:important}
p\left(\Delta_{\F}(p, a)\right)=p\left(\Delta_{\Fbar }(p, a)\right)\cap \F.
\end{equation}
\end{proposition}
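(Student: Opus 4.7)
The plan is to establish the two inclusions separately. The direction $\subseteq$ will be immediate, since any $x\in\Delta_{\F}(p,a)\subseteq\Delta_{\Fbar}(p,a)\cap\F^n$ has $p(x)\in\F$ as $p$ has real coefficients, and $p(x)\in p(\Delta_{\Fbar}(p,a))$ by construction.

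For the reverse inclusion $\supseteq$, I would exploit the Galois automorphism of $\Fbar$ over $\F$: since $\F$ is a real closed field one has $\Fbar=\F[i]$, and the unique nontrivial automorphism $\sigma\colon\Fbar\to\Fbar$ with fixed field $\F$ is complex conjugation. Starting from $\lambda\in p(\Delta_{\Fbar}(p,a))\cap\F$, I would pick a preimage $\bar x\in\Delta_{\Fbar}(p,a)$ with $p(\bar x)=\lambda$ and aim to prove $\bar x\in\F^n$. The key point is that the defining system of $\Delta_{\Fbar}(p,a)$, namely the polynomials $\Phi_1,\dots,\Phi_n$ in \eqref{eq:Phi_i}--\eqref{eq:Phi_n} together with the inequation $\partial p/\partial x_n\neq 0$, has all its coefficients in $\F$; hence $\Delta_{\Fbar}(p,a)$ is $\sigma$-stable. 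Thus $\sigma(\bar x)\in\Delta_{\Fbar}(p,a)$ and $p(\sigma(\bar x))=\sigma(p(\bar x))=\sigma(\lambda)=\lambda=p(\bar x)$. Applying T-goodness, which says $p$ is injective on $\Delta_{\Fbar}(p,a)$, I would conclude $\sigma(\bar x)=\bar x$, so $\bar x\in\F^n\cap\Delta_{\Fbar}(p,a)=\Delta_{\F}(p,a)$ and $\lambda\in p(\Delta_{\F}(p,a))$.

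The principal obstacle is justifying Galois-stability in the preceding step. The statement writes $a\in\Fbar^n$, but the argument really needs $a\in\F^n$ (equivalently $\sigma(a)=a$) so that the coefficients of $\Phi_i(x,a)$ sit in $\F$; otherwise applying $\sigma$ would send $\Delta_{\Fbar}(p,a)$ to $\Delta_{\Fbar}(p,\sigma(a))$, which need not coincide with it. Since the subsequent use of the proposition takes $a\in\R^n\subseteq\F^n$, this gap is easily closed by restricting the hypothesis; I would either add this assumption explicitly or verify that the paper's intended convention already secures $\sigma(a)=a$. Apart from this point, the argument is essentially formal.
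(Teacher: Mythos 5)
Your proposal is correct and takes essentially the same route as the paper: both prove only the inclusion $\supseteq$, take a preimage of $\lambda\in p\left(\Delta_{\Fbar}(p,a)\right)\cap\F$, observe that its coordinatewise conjugate is again a preimage in $\Delta_{\Fbar}(p,a)$, and invoke the injectivity of $p$ on $\Delta_{\Fbar}(p,a)$ coming from T-goodness to conclude the point is fixed by conjugation, hence lies in $\F^n$. Your side remark that conjugation-stability of $\Delta_{\Fbar}(p,a)$ really requires $a$ to be fixed by conjugation (i.e.\ $a\in\F^n$) is a fair observation; the paper's own proof tacitly makes the same assumption, and in all subsequent uses $a\in\R^n$, so nothing is lost.
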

\begin{proof}
We only need to verify the inclusion $p\left(\Delta_{\F}(p, a)\right) \supset p\left(\Delta_{\Fbar }(p, a)\right)\cap \F.$
Take any element $\lambda$ in $p\left(\Delta_{\Fbar }(p, a)\right)\cap \F$. Then, there exists $z\in \Fbar ^n$ such that  $p(z)=\lambda$ and
$z\in \Delta_{\Fbar }(p, a)$. 
However, we have
$\bar z\in \Delta_{\Fbar }(p, a)$, where $\bar z=(\bar z_1,\dots,\bar z_n)$ with $\bar z_i$ is the conjugate of $z_i$ in $\Fbar$, and $p(\bar z)=\lambda.$
 It follows from Definition~\ref{T-good} that $z=\bar z$ and hence $\lambda$ must belong to $p\left(\Delta_{\F}(p, a)\right)$. 
\end{proof} 



\subsection{Genericity of the T-goodness}
This subsection is devoted to proving the genericity of the T-goodness. For this purpose, we introduce some additional notations. 

Let $\Delta_{\Fbar}(p)$ be a subset of $\Fbar^{2n}$ defined as follows: 
\[\Delta_{\Fbar}(p) :=  \big\{(x,a) \in\Fbar^{2n} \mid \Phi_1(x,a)=0, \cdots, \Phi_{n}(x,a)=0, {\partial p}/{\partial x_n}\neq 0\big\},\]
where $\Phi_i$'s are defined in \eqref{eq:Phi_i} and \eqref{eq:Phi_n}. The two natural projections
\[\pi_1,\pi_2\colon \Fbar^{2n}\to \Fbar^n\]
are defined by $\pi_1(x,a)=x$ and $\pi_2(x,a)=a$. For each $a\in \Fbar^n$, we define a set $\Delta_{\F}(p, a)$ in $\Fbar^{n}$ as 
$$\Delta_{\Fbar}(p, a):=\pi_1\big(\pi_2^{-1}(a) \cap \Delta_{\Fbar}(p) \big).$$
Clearly, if we fix $a\in \Fbar^n$, then $\Delta_{\Fbar}(p, a)$ coincides with the set defined in \eqref{eq:Gamma_ainR}.

\begin{theorem}\label{thm:generic-tangency}
There exists a Zariski open subset $U$ in $\R^n$ such that all elements $a$ in $U$ are T-good for $p$.  
\end{theorem}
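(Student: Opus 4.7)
The plan is to exhibit a proper Zariski-closed subset $Z \subset \Fbar^n$ containing every non-T-good point; taking $U := \R^n \setminus (Z \cap \R^n)$ then produces the desired Zariski open subset of $\R^n$. Non-emptiness of $U$ is automatic once $Z$ is proper, because $Z$ will be cut out by polynomials in $\R(\varpi)[a]$ that are not identically zero in $\R[\varpi,a]$, and hence have a coefficient (as polynomials in $\varpi$) that is a nonzero element of $\R[a]$. By Definition~\ref{T-good}, a point $a\in\Fbar^n$ fails to be T-good if and only if either (i) the fiber $\Delta_\Fbar(p,a)$ is infinite, or (ii) there exist two distinct points in $\Delta_\Fbar(p,a)$ with equal $p$-values. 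I would treat these two failure modes separately.

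For mode (i), set $\mathcal{X}:=\Delta_\Fbar(p) \subset \Fbar^{2n}$ and consider the projection $\pi_2:\mathcal{X}\to\Fbar^n$ from the excerpt. The set $B_\infty:=\{a\in\Fbar^n:|\pi_2^{-1}(a)|=\infty\}$ is Zariski closed by upper semicontinuity of fiber dimension. Since $\mathcal{X}$ is cut out by the $n$ equations $\Phi_1=\cdots=\Phi_n=0$ inside $\Fbar^{2n}$, the Jacobian criterion forces $\dim\mathcal{X}\leq n$, so $\pi_2$ is generically quasi-finite and $B_\infty\subsetneq\Fbar^n$ is proper. For mode (ii), introduce the incidence set
\[
W:=\bigl\{(x,y,a)\in\Fbar^{3n}:(x,a),(y,a)\in\mathcal{X},\ x\neq y,\ p(x)=p(y)\bigr\}\subset \mathcal{X}\times_{\Fbar^n}\mathcal{X},
\]
together with the projection $\pi_a:\Fbar^{3n}\to\Fbar^n$ to the $a$-coordinates; the mode-(ii) bad locus is exactly $\pi_a(W)$. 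Since $\pi_2$ is generically quasi-finite, the fiber product $\mathcal{X}\times_{\Fbar^n}\mathcal{X}$ has dimension at most $n$, with the diagonal $\Delta_\mathcal{X}\simeq\mathcal{X}$ as an $n$-dimensional component. On each non-diagonal irreducible component $C$, the regular function $p(x)-p(y)$ cuts codimension one \emph{provided it does not vanish identically on $C$}; granting this, $\dim W\leq n-1$, so by Chevalley's theorem $\overline{\pi_a(W)}$ is a proper Zariski closed subset of $\Fbar^n$, and $Z:=B_\infty\cup\overline{\pi_a(W)}$ completes the reduction.

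The main obstacle is justifying the proviso above: no non-diagonal irreducible component $C$ of $\mathcal{X}\times_{\Fbar^n}\mathcal{X}$ that dominates $\Fbar^n$ under $\pi_a$ can have $p(x)-p(y)$ vanishing identically on it. Equivalently, no two distinct algebraic branches of $\pi_2:\mathcal{X}\to\Fbar^n$ can be globally glued by $p$. I would rule this out by exhibiting a single T-good point $a^*\in\Fbar^n$: if such a dominant component $C$ existed, then $\pi_a(C)=\Fbar^n$ would force T-goodness to fail at every $a\in\Fbar^n$, contradicting the existence of $a^*$. Constructing $a^*$ is the technical heart of the argument---one natural route is to choose $a^*$ with algebraically independent coordinates over a suitable subfield of $\F$ and verify the cardinality equality $\#\Delta_\Fbar(p,a^*)=\#p(\Delta_\Fbar(p,a^*))$ by comparing $\deg\varphi_{p,a^*}$ (which equals the right-hand side since $\varphi_{p,a^*}$ is square-free by Proposition~\ref{prop:exist}) against a direct count of tangency points at $a^*$, obtained via a generic perturbation argument breaking any symmetry of $p$ compatible with the tangency equations.
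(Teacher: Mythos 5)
Your reduction has the right general shape (collect the bad locus into a constructible set, show it is proper, then descend to $\R^n$ — the descent step at the end is fine in spirit and matches the paper's Claim 5), but as written it contains a genuine gap at exactly the point you flag as "the technical heart," and that point is not a technicality: it is the substance of the theorem. Your argument shows only that \emph{either} some T-good $a^*\in\Fbar^n$ exists and then T-goodness is generic, \emph{or} $p(x)-p(y)$ vanishes on a dominant non-diagonal component and every $a$ is bad. Producing the single point $a^*$ (equivalently, ruling out that two distinct branches of $\pi_2\colon\Delta_{\Fbar}(p)\to\Fbar^n$ are globally glued by $p$) is never done; the sketch "choose $a^*$ with algebraically independent coordinates and count tangency points via a generic perturbation breaking any symmetry of $p$ compatible with the tangency equations" is circular — counting $\sharp\,\Delta_{\Fbar}(p,a^*)$ and comparing it with $\deg\varphi_{p,a^*}$ \emph{is} the T-goodness statement you are trying to establish, and "symmetry of $p$" is not the only conceivable mechanism for gluing. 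This is precisely where the paper invests its effort: Claim 1 (dominance of $\pi_2$, proved via critical points of $p$ on a real compact sphere), Claim 2 (smoothness and dimension $n$ of $\Delta_{\Fbar}(p)$ via the identity $\varpi(x_n-a_n)\det M(x,a)=(\partial p/\partial x_n)^{n-1}$), and Claim 3 (generic smoothness, constancy of the fiber cardinality, and the implicit function theorem producing analytic branches $\varphi_1,\dots,\varphi_r$ over a neighbourhood of a chosen $\hat a$, on which the T-good locus is exhibited), before Claims 4–5 upgrade "analytic open" to "Zariski open over $\Fbar$" and then to a Zariski open subset of $\R^n$. Your proposal replaces Claims 4–5 by an incidence-variety/Chevalley argument, which is a legitimate alternative packaging, but it has no replacement for the content of Claims 1–3.

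A second, concrete error: "since $\mathcal{X}$ is cut out by the $n$ equations $\Phi_1=\cdots=\Phi_n=0$ inside $\Fbar^{2n}$, the Jacobian criterion forces $\dim\mathcal{X}\le n$" is backwards. Being cut out by $n$ equations in a $2n$-dimensional space gives, by Krull's height theorem, that every irreducible component has dimension \emph{at least} $n$; the upper bound $\dim\Delta_{\Fbar}(p)\le n$ is not automatic and requires showing the Jacobian has full rank $n$ along the locus where $\partial p/\partial x_n\neq 0$ — this is exactly the paper's Claim 2 determinant computation in the $a$-variables. Without that bound, your properness of $B_\infty$, the generic quasi-finiteness of $\pi_2$, and the estimate $\dim\bigl(\mathcal{X}\times_{\Fbar^n}\mathcal{X}\bigr)\le n$ all fail; you also implicitly use that $\pi_2$ is dominant (the paper's Claim 1), which again needs an argument. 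So the skeleton is salvageable, but both the dimension bound and, above all, the existence of one T-good point must be supplied before this constitutes a proof.
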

\begin{proof}
The proof is conducted in five claims as follows.

{\bf Claim 1:} \emph{The restricted morphism $\pi_2\colon \Delta_{\Fbar }(p) \to \Fbar ^n$ is Zariski dominant.} 
That is,   
the Zariski closure of $\pi_2\left(\Delta_{\Fbar }(p)\right) $ is equal to $\Fbar ^n$.
Indeed, we first prove that the map of rings 
$$\pi_2^*\colon \F[a]\to \F[x,a]/\langle\Phi_1,\ldots,\Phi_n\rangle$$
is injective. Suppose that this is not the case. Then, for some non-zero element $f\in \F[a]$, $\pi_2^*(f)=0$. This means that $$f(a)=\sum_{i=1}^n c_i(x,a) \Phi_i(x,a)$$
for some polynomials $c_i(x,a)$ in $\F[x,a]$. Let $\alpha\in \R^n$ be an element such that $f(\alpha)\neq 0$, and let $\varpi$ be fixed with a small value, then  
$c_i(x,\alpha)$  and $\Phi_i(x,\alpha)$ are polynomials in $\R[x]$ for all $i=1,\ldots,n$. Clearly, the set
\begin{equation}\label{eq:crit_Sw}
    \big\{x\in \R^n\mid \Phi_1(x,\alpha)=\cdots= \Phi_n(\varpi,x,\alpha)=0\big\}
\end{equation}
is 
the set of critical points of $p$ on the sphere centered at $\alpha$ with radius $\varpi$:
$$\mathbb{S}_{\varpi}:=\big\{x\in \R^n\mid \Phi_n(x,\alpha)=\varpi \left((x_1-\alpha_1)^2+\ldots+(x_n-\alpha_n)^2\right)-1=0\big\}.$$
The set in \eqref{eq:crit_Sw} is obviously nonempty because $\mathbb{S}_{\varpi}$ is a nonempty, compact, and smooth manifold. Let $\hat x\in \mathbb{S}_{\varpi}$ be a critical point of $p$ over $\mathbb{S}_{\varpi}$. Then $\Phi_i(\hat x,\alpha)=0$ for all $i=1,\ldots,n$. Therefore,
$$0\neq f(\alpha)=\sum_{i=1}^n c_i(\hat x,\alpha) \Phi_i(\hat x,\alpha)=0,$$
which is a contradiction. Hence, the map $\pi_2^*\colon \F[a]\to \F[x,a]/\langle\Phi_1,\ldots,\Phi_n\rangle$ is injective and so is the map 
$$\pi_2^*\colon \Fbar [a]\to \Fbar [x,a]/\langle\Phi_1,\ldots,\Phi_n\rangle.$$
This implies that the morphism $\pi_2\colon \Delta_{\Fbar }(p) \to \Fbar ^n$ is dominant.

{\bf Claim 2:} \emph{$\Delta_{\Fbar }(p)$ is a smooth variety of dimension $n$ in $\Fbar ^{2n}$.} 
Given $\alpha\in\R^n$ as shown in Claim 1, the set defined by equation  \eqref{eq:crit_Sw} is nonempty; therefore, so is $\Delta_{\Fbar }(p)$. Let $\left(\hat x, \hat a\right)$ be a point in $\Delta_{\Fbar }(p)$. By the definition, $\partial p/\partial x_n\left(\hat x\right) \neq 0$, hence there exists a neighbourhood $\mathcal V$ of $(\hat x,\hat a)$ in $\Fbar ^{2n}$ such that $\partial p/\partial x_n(x) \neq 0$ for all $(x,a) \in \mathcal V\cap \Delta_{\Fbar }(p)$. Consequently, we may write
$$
\mathcal V\cap \Delta_{\Fbar }(p)=\big\{(x, a) \in\mathcal V  \mid \Phi_i(x, a)=0, \, i=1, \ldots, n\big\}.
$$
We denote by $M(x,a)$ the following matrix:
\[M(x,a)=\left[\frac{\partial \Phi_i}{\partial a_j}(x, a)\right]_{1 \leq i, j \leq n}.\]
A direct computation shows that
$$
\varpi (x_n-a_n)\det\big(M(x,a)\big)=\big(\partial p/\partial x_n(x)\big)^{n-1} \neq 0,
$$
hence, that $\det\left(M(x,a)\right)\neq 0$
 for all $(x, a) \in \mathcal V\cap \Delta_{\Fbar }(p)$. Therefore, $\Delta_{\Fbar }(p)$ is a smooth variety of dimension $n$ in $\Fbar^{2n}$.

{\bf Claim 3:} \emph{ There exists an open analytic subset $\mathcal U$ in $\Fbar ^n$ such that all elements $a$ in $\mathcal U$ are T-good for $p$.} 

Recall from Claim 1 that $\pi_2:  \Delta_{\Fbar }(p) \rightarrow \Fbar ^n,(x, a) \mapsto a$ is dominant. By the generic smoothness theorem, see e.g.  
\cite[Theorem 25.3.3]{vakil2025rising}, there exists an open dense $\mathcal V \subset \mathbb{F}^n$ such that $\pi_2$ is smooth over $\mathcal V$ and for every $a\in \mathcal V$ the fiber $\pi_2^{-1}(a)$ is finite. Moreover, by shrinking $\mathcal V$, it follows from \cite[Proposition 9.7.8]{grothendieckEGA3} that
the number of points of $\pi_2^{-1}(a)$ is constant for all $a$ in $\mathcal V$. We now take a point $\hat a$ in $\mathcal V$ and let 
$$\pi_2^{-1}(\hat a)=\{(x^1,\hat a), \ldots,(x^r,\hat a)\}.$$
Since, for each $k=1,\ldots,r$ the morphism $\pi_2$ is smooth at $(x^j,\hat a)$, it follows that the matrix $M(x^k,\hat a)$ is invertible. According to the implicit function theorem, see e.g. \cite[Theorem I.1.18]{greuel2007introduction}, there exist an analytic open neighbourhood $\mathcal U'$ of $\hat a$ in $\mathcal V$ and neighbourhoods $\mathcal V_k$ of $x^k$ and analytic functions $\varphi_k\colon \mathcal U'\to \mathcal V_k$ such that $\varphi_k(\hat a)=x^k$ and
\[\Phi_1(\varphi_k(a),a)= \cdots =\Phi_n(\varphi_k(a),a)=0,\]
for all $a\in  \mathcal U'$. By shrinking $ \mathcal U'$ and $ \mathcal V_k$ we may assume that $\mathcal V_i\cap \mathcal V_j=\emptyset$ for all $i\neq j$. Then, one has
$$\pi_2^{-1}(\mathcal U')=\bigcup_{j=1}^r\mathcal V_j\times  \mathcal U'.$$
Indeed, take an element $a\in \mathcal U'$, the points $(\varphi_k(a),a)\in \pi_2^{-1}(a)$ for all $k=1,\ldots,r$. Hence
$$\pi_2^{-1}(a)=\{(\varphi_1(a),a),\ldots,(\varphi_r(a),a)\}$$
since $\sharp\ \pi_2^{-1}(a)=r$.

Let $\mathcal U$ be the open analytic subset in $\Fbar ^n$ defined as the set of elements $a$ in $\mathcal U'$ such that $p(\varphi_i(a))\neq p(\varphi_j(a))$ for all $i\neq j $ in $\{1,\ldots,r\}$. Then
$$\sharp\ p\left(\Delta_{\Fbar }(p, a)\right)=\sharp\ \Delta_{\Fbar }(p, a)=r,$$
for all $a\in \mathcal U$.

{\bf Claim 4:} \emph{ There exists a Zariski open subset $\mathcal U$ in $\Fbar ^n$ such that all elements $a$ in $\mathcal U$ are T-good for $p$.}

Consider the morphism $\tilde{p}\colon \Fbar ^{2n} \to \Fbar ^{n+1}$ mapping $(x,a)$ to $(p(x),a)$ and denote by $Y$ the Zariski closure of $\tilde{p}\left(\Delta_{\Fbar }(p)\right)$ in $ \Fbar ^{n+1}$. Applying \cite[Proposition 9.7.8]{grothendieckEGA3}
to the morphisms $\pi_2$ and $$\tilde{\pi}_2\colon Y\to  \Fbar ^{n}, (t,a)\mapsto a,$$ 
we obtain a Zariski open subset $\mathcal U$ of $\Fbar ^{n}$ such that $\sharp\ \pi_2^{-1}(a)$ and $\sharp\ \tilde{\pi}_2^{-1}(a)$ are constant on $\mathcal U$. By Claim 3, there exists an analytic open subset $\mathcal U'\subset \Fbar ^{n}$ such that all $a\in \mathcal U'$ are T-good for $p$. As $\mathcal U$ is Zariski open in $\Fbar ^{n}$, the intersection $\mathcal U\cap \mathcal U'$ is nonempty. 
Hence, for all $a\in \mathcal U$, we have
\begin{align*}
 \sharp\ p\left(\Delta_{\Fbar }(p, a)\right)= \sharp\ p\left(\Delta_{\Fbar }(p, b)\right)=\sharp\ \Delta_{\Fbar }(p, b)=\sharp\ \Delta_{\Fbar }(p, a),
\end{align*}
where $b$ is an element in $\mathcal U\cap \mathcal U'$.

{\bf Claim 5:} \emph{There exists a Zariski open subset $U$ in $\R^n$ such that all elements $a$ in $U$ are T-good for $p$.}

We consider the ideal 
$$\I=\langle\Phi_1(x,a),\ldots,\Phi_n(x,a), p(x)-t\rangle$$ in $\R[\varpi,x,a,t]$.
Let $\varphi$ denote a (square-free) polynomial in $\R[\varpi,a,t]$ generating the radical of the ideal $\I\cap \R(\varpi,a)[t]$.

Let
$$\J=\langle\Phi_1(x,a),\ldots,\Phi_n(x,a), g(x,b)-t\rangle.$$
be an ideal in $\R[\varpi,x,a,b,t]$, where $g(x,b)=b_1x_1+\ldots+ b_nx_n$. Let $\psi$ be a square-free polynomial in $\R[\varpi,a,b,t]$ generating the radical of the ideal $\J\cap \R(\varpi,a,b)[t].$
Notice that, for each $a\in \Fbar^n$ one has
$$\deg\varphi_{p,a}(t)=\sharp\ p\left(\Delta_{\Fbar }(p, a)\right) \text{ and } 
\deg_t(\psi_a)=\sharp\ \Delta_{\Fbar }(p, a),$$
where $\psi_a$ denotes the polynomial $\psi$ evaluated at $a$ in $\R[\varpi,b,t]$.
Let us define a Zariski open subset in $\Fbar^n$ as follows:
$$U_{\Fbar}:=\{a\in\Fbar^n\mid \deg_t\varphi = \deg\varphi_{p,a}(t) \text{ and } \deg_t\psi=\deg_t\psi_{a}\}.$$
Let $\mathcal U$ be a Zariski open subset in $\Fbar ^n$ obtained in Claim 4. Then $\mathcal U\cap U_{\Fbar}\neq \emptyset$,
and therefore 
\begin{align*}
 \sharp\ p\left(\Delta_{\Fbar }(p, a)\right)&=\deg_t(\varphi)=\deg\varphi_{p,a}(t)\\
 &= \deg_t(\psi)= \sharp\ \Delta_{\Fbar }(p, a)
\end{align*}
for all $a\in U_{\Fbar}$. Let $U$ be a Zariski open subset in $\R^n$ defined by
$$U:=\{a\in\R^n\mid \deg_t\varphi = \deg\varphi_{p,a}(t) \text{ and } \deg_t\psi=\deg\varphi_{g,a}\}.$$
This leads $U\subset U_{\Fbar}$, and therefore we conclude that all $a\in U$ are T-good for $p$.
\end{proof}

\begin{remark}
 \label{rmk:generic-x1} Let $\theta_{p,a}$ be the square-free part of a generator of $\J\cap \R(\varpi)[x_1]$, where
\begin{equation}\label{eq:J_ideal}
    \J:=\big\langle \Phi_1(x,a), \, \dots, \, \Phi_n(x,a), \, s\times\partial p/\partial x_n -1 \big\rangle
\end{equation}
is an ideal in $\R(\varpi)[s,x]$. Then there exists a Zariski open subset $U$ in $\R^n$ such that all elements $a$ in $U$ satisfy the condition $\deg \theta_{p,a} = \deg \varphi_{p,a}$. 

 Indeed, let us consider the point $\hat a$, its analytic neighbourhood $\mathcal U'$, and the analytic functions $\varphi_k\colon \mathcal U'\to \mathcal V_k$ as in the proof of Claim 3 of Theorem \ref{thm:generic-tangency}. Let $\mathcal U''$ be the set of elements $a$ in $\mathcal U'$ such that $\pi_1(\varphi_i(a))\neq \pi_1(\varphi_j(a))$ and $p(\varphi_i(a))\neq p(\varphi_j(a))$ for all $i\neq j $ in $\{1,\ldots,r\}$.  Here $\pi_1\colon \Fbar^{n}\to \Fbar$ denotes the projection on the first coordinate. Then for all $a\in \mathcal U''$ one has
 \begin{equation}\label{eq:theta_varphi}
 \deg \theta_{p,a} = \deg \varphi_{p,a}=\sharp\ \Delta_{\Fbar }(p, a).
\end{equation}
Applying the same argument as in the proof of Claim 5 of Theorem \ref{thm:generic-tangency} we obtain  a Zariski open subset $U$ in $\R^n$ such that all elements $a$ in $U$ satisfy \eqref{eq:theta_varphi}.
\end{remark}


From Theorem \ref{thm:generic-tangency} and Remark \ref{rmk:generic-x1}, we immediately obtain the following corollary.

\begin{corollary}\label{cor:generic-x1} There exists a Zariski open subset $U$ in $\R^n$ such that all elements $a$ in $U$ satisfy the condition \eqref{eq:theta_varphi}.
\end{corollary}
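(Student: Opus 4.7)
The plan is simply to intersect the two Zariski open subsets already produced by the preceding results. I would first denote by $U_1 \subset \R^n$ the Zariski open subset supplied by Theorem \ref{thm:generic-tangency}, on which every point $a$ is T-good for $p$. By Definition \ref{T-good} this yields $\sharp\ p(\Delta_{\Fbar}(p,a)) = \sharp\ \Delta_{\Fbar}(p,a)$, and since $\varphi_{p,a}$ is, by Proposition \ref{prop:exist}, the square-free univariate polynomial whose root set is exactly $p(\Delta_{\Fbar}(p,a))$, one has $\deg \varphi_{p,a} = \sharp\ p(\Delta_{\Fbar}(p,a))$. Consequently, the second equality in \eqref{eq:theta_varphi} holds throughout $U_1$.

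Next, I would let $U_2 \subset \R^n$ be the Zariski open subset produced by Remark \ref{rmk:generic-x1}, on which the first equality $\deg \theta_{p,a} = \deg \varphi_{p,a}$ holds. Setting $U := U_1 \cap U_2$, this is a Zariski open subset of $\R^n$ on which both equalities of \eqref{eq:theta_varphi} hold simultaneously for every $a \in U$.

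The only point requiring verification is that $U$ is nonempty — otherwise the corollary would be vacuous. This follows from the irreducibility of $\R^n$ as an affine variety: any finite intersection of nonempty Zariski open subsets of an irreducible space is again nonempty (and in fact Zariski-dense). Since $U_1$ and $U_2$ are both nonempty and Zariski open in $\R^n$, so is their intersection $U$. Beyond this routine bookkeeping there is no substantive obstacle, as all the technical content has already been established in Theorem \ref{thm:generic-tangency} and Remark \ref{rmk:generic-x1}; the corollary is essentially a formal consequence of combining these two genericity statements.
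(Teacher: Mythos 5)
Your argument is correct and matches the paper's: the corollary is stated there as an immediate consequence of Theorem \ref{thm:generic-tangency} and Remark \ref{rmk:generic-x1}, which is exactly your intersection $U_1\cap U_2$ (with the nonemptiness of the intersection following from irreducibility of $\R^n$, as you note). Your observation that T-goodness plus Proposition \ref{prop:exist} yields the equality $\deg\varphi_{p,a}=\sharp\,\Delta_{\Fbar}(p,a)$ is precisely the intended link, so no further justification is needed.
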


\subsection{A sufficient condition} Testing the equality \eqref{eq:def_Tgood} directly may be difficult in general. We therefore provide a  sufficient condition relying on \eqref{eq:theta_varphi} for the T-goodness of $a$ that can be verified algorithmically. 

Assume that $\Delta_{\Fbar}(p,a)$ is finite. Let  $G$ be the reduced Gr\"{o}bner basis of $\J$ given in \eqref{eq:J_ideal} w.r.t the pure lexicographic monomial order  $s>x_n>\cdots>x_1$ in $\R(\varpi)[s,x]$.

\begin{proposition}\label{prop:testTgood}
Assume that $\deg \theta_{p,a} = \deg \varphi_{p,a}$.
Then, the following statements are equivalent:
\begin{itemize}
    \item[(i)] $a$ is T-good for $p$;
    \item[(ii)] $\R(\varpi)[s,x]$ is in shape position w.r.t the order $>$, i.e.  there exist univariate polynomials $\theta_2,\dots,\theta_{n+1}$ in $\R(\varpi)[x_1]$ such that
\begin{equation}\label{eq:testTgood}
 G=\big[\theta_{p,a}(x_1), x_2-\theta_2(x_1),\dots, x_n-\theta_{n}(x_1), s-\theta_{n+1}(x_1)\big]. 
\end{equation}
\end{itemize}
 
\end{proposition}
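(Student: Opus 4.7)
The plan is to translate both conditions into geometric statements about the finite set $\Delta_{\Fbar}(p,a) = V(\J)$ and then invoke a shape-lemma-style argument. The starting point is the pair of degree identities
\[
\deg\theta_{p,a} = \sharp\,\pi_1(\Delta_{\Fbar}(p,a)), \qquad \deg\varphi_{p,a} = \sharp\,p(\Delta_{\Fbar}(p,a)),
\]
which hold because $\theta_{p,a}$ and $\varphi_{p,a}$ are, by construction, the square-free univariate polynomials whose root sets coincide with these projections of $V(\J)$. The hypothesis of the proposition therefore reads $\sharp\,\pi_1(\Delta_{\Fbar}(p,a)) = \sharp\,p(\Delta_{\Fbar}(p,a))$.

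For (ii) $\Rightarrow$ (i), I would observe that reducing modulo the relations $x_i - \theta_i(x_1)$ and $s - \theta_{n+1}(x_1)$ induces a ring isomorphism $\R(\varpi)[s,x]/\J \cong \R(\varpi)[x_1]/(\theta_{p,a})$. Since $\theta_{p,a}$ is square-free, the right-hand side is reduced; hence $\J$ is radical and $\sharp\,\Delta_{\Fbar}(p,a) = \deg\theta_{p,a} = \deg\varphi_{p,a} = \sharp\,p(\Delta_{\Fbar}(p,a))$, which is exactly T-goodness.

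For (i) $\Rightarrow$ (ii), T-goodness together with the hypothesis forces $\sharp\,\Delta_{\Fbar}(p,a)$, $\sharp\,p(\Delta_{\Fbar}(p,a))$, $\deg\varphi_{p,a}$, $\deg\theta_{p,a}$, and $\sharp\,\pi_1(\Delta_{\Fbar}(p,a))$ all to agree, so $\pi_1$ is injective on $V(\J) = \{P_j\}_{j=1}^r$ with $P_j = (\alpha_j, \beta_{2,j}, \ldots, \beta_{n,j}, \gamma_j)$ and distinct $\alpha_j$. Lagrange interpolation over $\Fbar$ produces polynomials $\theta_i \in \Fbar[x_1]$ satisfying $\theta_i(\alpha_j) = \beta_{i,j}$ and $\theta_{n+1}(\alpha_j) = \gamma_j$; since $V(\J)$ is $\mathrm{Gal}(\Fbar/\R(\varpi))$-stable and $\pi_1$ is injective, the Galois action on the $P_j$ is compatible, and the $\theta_i$ actually lie in $\R(\varpi)[x_1]$. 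Setting $\J' := \langle\theta_{p,a}, x_2-\theta_2, \ldots, s-\theta_{n+1}\rangle$, one has $V(\J') = V(\J)$ and $\J'$ is radical, so $\J \subseteq \J' = I(V(\J))$; equality---which yields the shape-form Gröbner basis of (ii)---holds precisely when $\J$ itself is radical.

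The main obstacle is thus to show the radicality of $\J$ under the T-goodness hypothesis. I would attack this via the Jacobian criterion: at each point of $V(\J)$, the Jacobian determinant of $(\Phi_1,\ldots,\Phi_n, s\,\partial p/\partial x_n - 1)$ with respect to $(x_1,\ldots,x_n,s)$ equals, up to sign, $(\partial p/\partial x_n)\cdot\det[\partial\Phi_i/\partial x_j]_{i,j=1}^n$. The first factor is nonzero by the defining inequation of $\J$, and nonvanishing of the second factor is equivalent to the étaleness of $\pi_2\colon\Delta_{\Fbar}(p)\to\Fbar^n$ at that fiber point, which was already obtained over a Zariski-open locus in Claims 2--3 of the proof of Theorem~\ref{thm:generic-tangency}. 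The final step is to verify that every T-good $a$ satisfying the proposition's hypothesis lies in this open set, which reduces to an intersection-of-generic-opens argument.
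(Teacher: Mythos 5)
Your overall skeleton matches the paper's proof: your (ii)$\Rightarrow$(i) is the same counting argument (shape position parametrizes $\Delta_{\Fbar}(p,a)$ by the roots of $\theta_{p,a}$, so $\sharp\,\Delta_{\Fbar}(p,a)=\deg\theta_{p,a}=\deg\varphi_{p,a}=\sharp\, p(\Delta_{\Fbar}(p,a))$), and in (i)$\Rightarrow$(ii) you, like the paper, first deduce from T-goodness plus $\deg\theta_{p,a}=\deg\varphi_{p,a}$ that the $x_1$-coordinates of the points of $V_{\Fbar}(\J)$ are pairwise distinct. The paper concludes at that point by invoking the Shape Lemma \cite{gianni89}; you instead try to reprove it by interpolation and Galois descent, which correctly reduces everything to the radicality of $\J$ --- exactly the hypothesis under which the cited lemma operates. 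Up to there the proposal is sound.

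The gap is in your proposed proof of radicality, and it is twofold. First, Claim 2 of Theorem \ref{thm:generic-tangency} establishes invertibility of $M(x,a)=\left[\partial\Phi_i/\partial a_j\right]$, i.e.\ smoothness of the total space $\Delta_{\Fbar}(p)$ in the $a$-directions; your Jacobian factorization needs invertibility of $\left[\partial\Phi_i/\partial x_j\right]$ at the points of the fiber over the \emph{given} $a$, a different matrix for which no identity like $\varpi(x_n-a_n)\det M=(\partial p/\partial x_n)^{n-1}$ is available, and whose nonvanishing (equivalently, $\pi_2$ \'etale at those fiber points) is guaranteed by Claims 3--4 only over a dense open subset of the base. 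Second, and more fundamentally, the closing step ``every T-good $a$ satisfying the hypothesis lies in this open set'' is unsupported: T-goodness and $\deg\theta_{p,a}=\deg\varphi_{p,a}$ are purely set-theoretic cardinality conditions on $V_{\Fbar}(\J)$ and carry no information about the scheme structure of the fiber, so they do not force the fixed point $a$ into the generic smooth locus; an intersection-of-generic-opens argument only yields statements for generic $a$, whereas the proposition is asserted for an arbitrary $a$ satisfying (i) and the degree hypothesis. As written, your (i)$\Rightarrow$(ii) only shows that $\sqrt{\J}$ admits a shape-position basis; to identify this with the reduced Gr\"obner basis $G$ of $\J$ itself you must either prove radicality of $\J$ by some genuinely different mechanism, or do as the paper does and appeal directly to the Shape Lemma (noting that its standard formulation also assumes a radical zero-dimensional ideal, so the radicality of $\J$ is the real content either way).
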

\begin{proof} Clearly, $\Delta_{\Fbar }(p, a)$ is the image of $V_{\Fbar}(\J)$ under the natural projection \[ \Fbar^{n+1} \to \Fbar^n, \ (s,x)\mapsto x.\]
Hence, the $x_1$-coordinate values of the points in $\Delta_{\Fbar}(p,a)$ and in $V_{\Fbar}(\J)$ coincide.

Assume that $(\rm i)$ holds. By assumption, one has $\deg \theta_{p,a}=\sharp\Delta_{\Fbar }(p, a)$. Hence, the roots of $\theta_{p,a}(x_1)$ are the $x_1$-coordinate values of the points in $V_{\Fbar}(\J)$. According to the Shape Lemma \cite{gianni89}, $\R(\varpi)[s,x]$ is in shape position w.r.t the order $>$.

Suppose that $(\rm ii)$ holds. The roots of $\theta_{p,a}$ are the $x_1$-coordinate values of the points in $V_{\Fbar}(\J)$. 
It follows that the roots of $\theta(x_1)$ are the $x_1$-coordinate values of the points in $\Delta_{\Fbar }(p, a)$. Hence, 
\[\Delta_{\Fbar}(p, a)=\big\{x\in\Fbar^n:\theta_{p,a}(x_1)=0,x_2=\theta_2(x_1),\dots, x_n=\theta_{n}(x_1)
\big\},\]
and the cardinality of this set equals $\deg\theta_{p,a}$. Therefore, $a$ is T-good for $p$.
\end{proof}


Denote by $\mathcal{C}$ the following condition:
\[(\mathcal{C}) \quad \deg \theta_{p,a} = \deg \varphi_{p,a} \text{ and } \R(\varpi)[s,x] \text{ is in shape position}.\]

\begin{remark}\label{cor:C_condition} Proposition \ref{prop:testTgood} establishes that $(\mathcal{C})$ is sufficient to ensure that $a$ is T-good for $p$. In addition, Corollary \ref{cor:generic-x1} implies that this condition holds generically.
\end{remark}

 The polynomial $\theta_{p,a}$ is computed in the same manner as $\varphi$ in Step 2 of Algorithm \ref{alg:compute_phi_pa}, by working with the ideal $\J$. Once the reduced Gr\"{o}bner basis 
$G$ has been computed, checking whether it is in shape normal form \eqref{eq:testTgood} becomes a straightforward task.
 Hence, 
$(\mathcal{C})$ is checkable in practice. 
In the following, Algorithm \ref{alg:testTgood} is designed to check $(\mathcal{C})$.

\begin{algorithm}\caption{Testing $(\mathcal{C})$ - a sufficient condition for T-goodness}\label{alg:testTgood}
    \smallskip
\begin{flushleft}
    \textbf{Input:} $p$ in $\R[x]$ and $a\in\R^n$
\smallskip

    \textbf{Output:} $[\boo,\varphi]$ 
\end{flushleft}
    \begin{itemize}
\item [\rm 1:] Perform Algorithm \ref{alg:compute_phi_pa} with the input $p,a$, and let $\varphi$ be the output
\item [\rm 2:] Compute $G$ -- the reduced Gr\"{o}bner basis of $\J$ given in \eqref{eq:J_ideal} w.r.t the pure lexicographic monomial order  $s>x_n>\cdots>x_1$ in $\R(\varpi)[s,x]$
\item [\rm 3:] Let $\{\bar{\theta}\}=G \cap \R(\varpi)[t]$ 
and compute $\theta$ -- the square-free part of $\bar{\theta}$ 
\item [\rm 4:] If $(\mathcal{C})$ holds then return $[\texttt{true},\varphi]$, else return $[\texttt{false},\varphi]$
    \end{itemize}
\end{algorithm}

\textit{Description}.
The input of Algorithm \ref{alg:testTgood} is a polynomial
$p$ in $\R[x]$ together with a point $a\in\R^n$. Its output is the pair $[\boo,\varphi]$, where $\boo$ is a Boolean value that returns \texttt{true} if and only if $a$ is T-good for $p$, and $\varphi$ is the $n$-th non-critical tangency value polynomial of $p$ at $a$. The goal of Step 1 is computing $\varphi$ using Algorithm \ref{alg:compute_phi_pa}. Steps 2–3 are dedicated to computing $\theta$.


\textit{Correctness}. After Step 1, $\varphi$ is the $n$-th non-critical tangency value polynomial of $p$ at $a$. The polynomial $\theta$ obtained after Step~3 has the following property: the roots of $\theta(x_1)$ are the $x_1$-coordinate values of the points in $V_{\Fbar}(\J)$. One observes that condition $(\mathcal{C})$ holds if and only if, as stated in Proposition \ref{prop:testTgood}, $a$ is T-good for $p$. Equivalently, this occurs if and only if the Boolean variable \texttt{boo} is equal to \texttt{true}.

\begin{example}\label{ex:linear}
Consider the bivariate polynomial $p=x_1-x_2$ and $a=(1,3)$. After Steps 1 and 2, one has 
\[\varphi_{p,a}(t)=wt^{2}+4 wt + 4 w -2, \ \theta_{p,a}(x_1) = 2 wx_1^{2}-8 w x_1 +16 w -1.\]
Moreover, the $\J\cap\R(\varpi)[s,x_1,x_2]$ is in shape position the pure lexicographic monomial order  $s>x_2>x_1$ because of:
\[G=\big[2 wx_1^{2}-8 w x_1 +16 w -1, \ x_2+ x_1 - 4, \ s+1\big].\]
Hence, $(1,3)$ is T-good for $p$. Algorithm \ref{alg:testTgood} returns $[\texttt{true},wt^{2}+4 wt + 4 w -2]$.
\end{example}

\begin{example}\label{ex:xy_2} Consider the polynomial given in Example \ref{ex:xy_1} and $a=(0,0)$. From the results in the example, the  polynomial $\varphi_{p,a}$ is of degree $4$. It follows that 
$\sharp\ p\left(\Delta_{\Fbar }(p, a)\right)=4<8=\deg\theta_{p,a}\leq\sharp\ \Delta_{\Fbar }(p, a)$, hence $(0,0)$ is not T-good for $p$. However, one can verify by using Algorithm \ref{alg:testTgood} that $a=(1,3)$ is T-good for $p$. 
\end{example}

\section{Deciding lower-boundedness}\label{sec:lbounded}
In this section, we first establish a necessary and sufficient condition for the lower-boundedness of a polynomial. Based on this condition, we then develop a probabilistic algorithm to decide whether a given polynomial $p$ is lower-bounded.

Sturm's theorem will be applied to the interval $(-\infty_{\F},-\infty)$. For this purpose, we recall the definitions of the signs of a polynomial $\varphi\in\F[t]$ at $-\infty_{\F}$ and $-\infty$, which are stated below:
\begin{equation}\label{eq:sign_infty_F}
\mathrm{sign}(\varphi(-\infty_{\F}))=
\begin{cases}
\mathrm{sign}(\LC(\varphi)), \text{ if } \deg(\varphi) \text{ is even},\\
-\mathrm{sign}(\LC(\varphi)) \text{ if } \deg(\varphi) \text{ is odd},
\end{cases}
\end{equation}
and
 $\mathrm{sign}(\varphi(-\infty))=\mathrm{sign}(\varphi(-r))$, 
for all sufficiently large $r>0$. More precisely, if we write
 $\varphi(t)=\sum_{\alpha\geq \alpha_0} a_{\alpha}(t)\varpi^{\alpha}$
with $a_{\alpha}(t)\in \R[t]$ and $a_{\alpha_0}(t)\neq 0$, then 
\begin{equation}\label{eq:sign_infty}
\mathrm{sign}(\varphi(-\infty))=\mathrm{sign}(a_{\alpha_0}(-\infty))=
\begin{cases}
\mathrm{sign}(\LC(a_{\alpha_0})), \text{ if } \deg(a_{\alpha_0}) \text{ is even},\\
-\mathrm{sign}(\LC(a_{\alpha_0})) \text{ if } \deg(a_{\alpha_0}) \text{ is odd}.
\end{cases}
\end{equation}

\subsection{A necessary and sufficient condition}\label{subsec:iff_lbounded}
Let $p \in \R[x]$ and recall that $\varphi_{p,a}$ denotes the $n$-th non-critical tangency value polynomial of $p$ at $a \in \R^n$. With notations as in Section \ref{subsec:signchanges}, we define 
\begin{equation}\label{eq:vpa}
v(p,a):=v_{\varphi_{p,a}}(-\infty_{\F})-v_{\varphi_{p,a}}(-\infty).
\end{equation}
Once $\varphi_{p,a}$ is obtained, the numbers of sign variations of its Sturm sequence at $-\infty_{\F}$ and $-\infty$ can be computed using the rules in \eqref{eq:sign_infty_F} and \eqref{eq:sign_infty}.
Consequently, the invariant $v(p,a)$ is effectively computable. The next result provides a criterion for the lower-boundedness of a given polynomial. 

\begin{theorem}\label{thm:lower-bounded}
Let $p\in \R[x]$ be a 
polynomial and let $a\in\R^n$ be T-good for $p$. Then, the following are equivalent: 
\begin{itemize}
    \item[(i)] $p$ is lower-bounded;
    \item[(ii)] ${v}(p,a)=0$.
\end{itemize}
\end{theorem}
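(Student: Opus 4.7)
The strategy is to translate condition $(\mathrm{ii})$ into a statement about tangency points, and then invoke Proposition~\ref{prop:tangency_ord}. The bridge is Sturm's theorem combined with the fact that, under T-goodness, real roots of $\varphi_{p,a}$ correspond bijectively to $p$-values over $\Delta_{\F}(p,a)$.

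First, I would apply Sturm's theorem (Theorem \ref{sturmtheorem}) to $\varphi_{p,a}\in \F[t]$ on the interval $(-\infty_{\F},-\infty)$. Since $-\infty_{\F}<-\infty$ in $\F$ and neither is a root of $\varphi_{p,a}$, the difference $v(p,a)=v_{\varphi_{p,a}}(-\infty_{\F})-v_{\varphi_{p,a}}(-\infty)$ equals the number of roots of $\varphi_{p,a}$ lying in $(-\infty_{\F},-\infty)$. Hence $(\mathrm{ii})$ is equivalent to saying that $\varphi_{p,a}$ has no root in $(-\infty_{\F},-\infty)$.

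Next, I would characterize these roots concretely. An element $\lambda\in \F$ lies in $(-\infty_{\F},-\infty)$ precisely when $\mathrm{ord}\,\lambda<0$ and $\mathrm{LC}(\lambda)<0$, which by the definition of the morphism $\lim_{\varpi\to 0}$ is equivalent to $\lim_{\varpi\to 0}\lambda=-\infty$. By Proposition~\ref{prop:exist}, the roots of $\varphi_{p,a}$ in $\F$ are exactly the elements of $p(\Delta_{\Fbar}(p,a))\cap\F$, and because $a$ is assumed T-good, Proposition~\ref{prop:real_values_F} identifies this set with $p(\Delta_{\F}(p,a))$. Therefore condition $(\mathrm{ii})$ is equivalent to the statement: for every $x\in\Delta_{\F}(p,a)$, one has $\lim_{\varpi\to 0} p(x)\neq -\infty$, i.e., $\lim_{\varpi\to 0} p(x)\in \R\cup\{+\infty\}$.

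Finally, this is exactly condition $(\mathrm{ii})$ of Proposition~\ref{prop:tangency_ord}, which asserts the equivalence with the lower-boundedness of $p$. Chaining these three equivalences closes the argument. The main subtlety to handle carefully is the use of T-goodness: without it one only has $p(\Delta_{\F}(p,a))\subseteq p(\Delta_{\Fbar}(p,a))\cap\F$, and a spurious real root of $\varphi_{p,a}$ arising from a pair of conjugate tangency points could corrupt the count. The T-goodness hypothesis is precisely what rules this out via Proposition~\ref{prop:real_values_F}; no further delicate estimates are needed, since finiteness of the sets involved (Proposition~\ref{prop:finite}) ensures that the root-counting and the limit analysis interact cleanly.
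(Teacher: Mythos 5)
Your proof is correct and follows essentially the same route as the paper: Sturm's theorem reduces $v(p,a)=0$ to the absence of roots of $\varphi_{p,a}$ in $(-\infty_{\F},-\infty)$, Proposition~\ref{prop:exist} together with the T-goodness identity of Proposition~\ref{prop:real_values_F} identifies those roots with values $p(x)$, $x\in\Delta_{\F}(p,a)$, having $\lim_{\varpi\to 0}p(x)=-\infty$, and Proposition~\ref{prop:tangency_ord} converts this into lower-boundedness. Your explicit remark on why T-goodness is needed (to exclude real values coming only from conjugate pairs of tangency points) is exactly the role it plays in the paper's argument, just spelled out more carefully.
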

\begin{proof}
By Proposition \ref{prop:tangency_ord}, $p$ is lower-bounded if and only if, $\lim_{\varpi\to 0} p(x)$ belongs to $\R\cup\{+\infty\}$, equivalently $\mathrm{ord}\ p(x)\geq 0$, for all points $x $ in  $\Delta_{\F}(p, a)$. Combining Proposition \ref{prop:exist} and the equality \eqref{eq:important}, this is equivalent to the assertion that the polynomial $\varphi_{p,a}$ has no roots in the interval $(-\infty_{\F},-\infty)$. 
The theorem therefore follows from Sturm's theorem (Theorem \ref{sturmtheorem}).
\end{proof}

\begin{example}
Consider the bivariate polynomial $p=x_1-x_2$ and $a=(1,3)$. As pointed out in Example \ref{ex:linear}, $a$  is T-good for $p$ and 
$\varphi_{p,a}(t)=wt^{2}+4 wt + 4 w -2$. The Sturm sequence is given as follows
\[[wt^{2}+4 wt +4 w -2, 2 wt +4 w, 2].\]
The two lists of signs of the sequence at $-\infty_{\F}$ and $-\infty$ are $[+, -, +]$ and $[-, -, +]$, respectively. Hence, one has 
$ v_{\varphi_{p,a}}(-\infty_{\F})=2$ and $v_{\varphi_{p,a}}(-\infty)=1$, and $v{(p,a)}=1 $. According to Theorem \ref{thm:lower-bounded}, $p$ is not lower-bounded.
\end{example}

\subsection{An algorithm to decide lower-boundedness}\label{subsec:alg_lbounded} Based on the necessary and sufficient condition introduced in Theorem \ref{thm:lower-bounded}, we propose Algorithm \ref{alg:lbounded} to decide the lower-boundedness of an arbitrary real polynomial $p$.

Algorithm \ref{alg:lbounded} is probabilistic in nature, since it involves choosing a generic point $a \in \R^n$ to ensure that the T-goodness condition holds.

\textit{Description}.
The input of Algorithm \ref{alg:lbounded} is a polynomial
$p$ in $\R[x]$. The output is a Boolean value that returns \texttt{true} if and only if $p$ is lower-bounded.
In Step~1, we choose a random point $a\in\R^n$ and perform Algorithm \ref{alg:testTgood}. The resulting output $[\texttt{true},\varphi_{p,a}]$ confirms that $a$ is T-good for $p$ and provides the $n$-th non-critical tangency value polynomial  of $p$ at $a$. The goal of Step 2 is to determine the numbers of sign changes $-\infty_{\F}$ and $-\infty$ of the Sturm sequence of $\varphi_{p,a}$.
Finally, the value $v(p,a)$ is computed in Step 3. 

\textit{Correctness}. According to Corollary  \ref{cor:generic-x1} and Remark \ref{rmk:generic-x1}, the sufficient condition $\mathcal{C}$ of T-goodness is generic. Hence, if $a\in\R^n$ is chosen at random, then after Step 1, the first value of the output of Algorithm \ref{alg:testTgood} is $\texttt{true}$. The output of Algorithm \ref{alg:lbounded} turns \texttt{true} if and only if $v(p,a)=0$. By Theorem \ref{thm:lower-bounded}, this equality holds precisely if and only if 
$p$ is lower-bounded.

\begin{algorithm}\caption{Deciding the lower-boundedness of a polynomial $p$}\label{alg:lbounded}
    \smallskip
\begin{flushleft}
    \textbf{Input:} $p$ in $\R[x]$ 
\smallskip

    \textbf{Output:} a Boolean value which equals \texttt{true} if and only if $p$ is lower-bounded
\end{flushleft}
\begin{itemize}
    \item [\rm 1:] Take a random point $a\in\R^n$ and perform Algorithm \ref{alg:testTgood} with the output $[\texttt{true},\varphi_{p,a}]$ 

\item [\rm 2:] Compute the Sturm sequence of $\varphi_{p,a}$, and then the two numbers of sign changes $ v_{\varphi_{p,a}}(-\infty_{\F})$ and $v_{\varphi_{p,a}}(-\infty)$ relying on the rules \eqref{eq:sign_infty_F} and \eqref{eq:sign_infty}

\item [\rm 3:] Compute  $v(p,a)$ as in \eqref{eq:vpa}

\item [\rm 4:] Return $\texttt{true}$ if $v(p,a)=0$ and $\texttt{false}$ otherwise
    \end{itemize}
\end{algorithm}

\begin{example}\label{ex:xMotzkin1} Consider the bivariate Motzkin polynomial $m=x_1^4x_2^2 + x_1^2x_2^4 - 3x_1^2x_2^2 + 1$. One takes $a=(1,3)$ in Step 1 and can verify that $a$ is T-good for $m$. The corresponding tangency value polynomial $\varphi_{m,a}$, computed in Step 2, has degree $6$. The sequences of signs at $-\infty_{\F}$ and $-\infty$ of the Sturm sequence are identical, namely $[+, -, +, +, -, +, -]$. Consequently, $v{(m,a)}=0 $ and Algorithm \ref{alg:lbounded} returns $\texttt{true}$, and hence $m$ is lower-bounded.
\end{example}

\begin{example}\label{ex:xy_3}
Let $q$ be the polynomial considered in Examples \ref{ex:xy_1} and \ref{ex:xy_2}; recall that $q(x_1,x_2)=(x_1x_2-1)^2 + x_2^2$. We summarize the results of Algorithm~\ref{alg:lbounded} applied to $q$ and $q+x_1$ at the point $a=(1,3)$ in Table \ref{table:1}. The results indicate that $q$ is lower-bounded, whereas $q+x_1$ is not. 

\begin{table}[H]
\footnotesize
\begin{tabular}{|c|c|c|c|c|c|c|}
\hline
        & T-good & $\deg\varphi$ & signs at $-\infty_{\F}$       & signs at $-\infty$            & $v$ & output \\ \hline
$q$     & yes    & 8             & $[+, -, +, -, +, -, +, -, +]$ & $[+, -, +, -, +, -, +, -, +]$ & $0$      & \texttt{true}   \\ \hline
$q+x_1$ & yes    & 8             & $[+, -, +, -, +, -, +, -, +]$ & $[-, -, +, -, +, -, +, -, +]$ & $1$      & \texttt{false} \\ \hline
\end{tabular}

\caption{Results of Algorithm \ref{alg:lbounded} for $q $ and $ q+x_1$ at $a=(1,3)$}
\label{table:1}
\end{table}
\end{example}

\subsection{Applications to deciding non-negativity and convexity}\label{sec:apply}
This subsection applies the results in Section \ref{sec:lbounded} to 
decide non-negativity and convexity of polynomials. Recall from \cite{Blum1998,ahmadi2013np} that such decision problems are NP-hard in general.

\textit{Deciding non-negativity.} A real polynomial 
$p$ in $\R[x]$ is said to be non-negative over $\R^n$
if $p(x)$ for all $x\in\R^n$.
Characterizing non-negativity of polynomials is a fundamental problem in both real algebraic geometry and polynomial optimization. 

Assume that $p$ has degree $d$, and $p=p_{d}+\cdots+p_0$, where $p_k$ is homogeneous of degree $k$. Denote by $\bar{p}$ the homogenization of $p$,
$$\bar{p}(x,z):=p_d+zp_{d-1}+\cdots+z^d p_0\in \R[x,z].$$

If $p$ is homogeneous, then its lower boundedness and non-negativity are equivalent. This equivalence, however, does not hold for non-homogeneous polynomials. Nevertheless, by \cite[Lemma 3.3]{laurent2009} the non-negativity of $p$ is equivalent to the non-negativity of its homogenization $\bar{p}$. Hence, we obtain the following lemma:

\begin{lemma}
 \label{prop:nonneg} A polynomial $p\in \R[x]$ is non-negative over $\R^n$ if and only if $\bar p$ is lower-bounded over $\R^{n+1}$.
\end{lemma}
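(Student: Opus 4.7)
The plan is to split the biconditional into two equivalences and combine them: first, non-negativity of $p$ over $\R^n$ is equivalent to non-negativity of its homogenization $\bar p$ over $\R^{n+1}$; second, because $\bar p$ is homogeneous, its non-negativity over $\R^{n+1}$ is equivalent to its lower-boundedness over $\R^{n+1}$. The first equivalence is exactly the content of \cite[Lemma 3.3]{laurent2009} cited immediately above the statement, so there is nothing new to prove there. The only piece that genuinely needs an argument is the second equivalence, and the plan is to establish it by a homogeneity/scaling argument.

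For the nontrivial direction of the homogeneous case, I would proceed as follows. The easy direction is immediate: if $\bar p \geq 0$ on $\R^{n+1}$, then $\bar p$ is lower-bounded by $0$. For the converse, suppose $\bar p$ is lower-bounded but, for contradiction, there exists $y_0 \in \R^{n+1}$ with $\bar p(y_0) < 0$. Since $\bar p$ is homogeneous of degree $d$, one has $\bar p(\lambda y_0) = \lambda^d \bar p(y_0)$ for every $\lambda \in \R$. If $d$ is even, then $\lambda^d \to +\infty$ as $|\lambda| \to \infty$, so $\bar p(\lambda y_0) \to -\infty$; if $d$ is odd, taking $\lambda \to +\infty$ yields the same conclusion. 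Either way, $\bar p$ is unbounded below along the ray through $y_0$, contradicting lower-boundedness.

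The only subtle point worth checking is the degenerate case $d = 0$, where $\bar p$ is a constant and the equivalence is trivial, and the case where the chosen $y_0$ might force us to care about the sign of $\lambda$; neither is a real obstacle, since we only need one direction of the limit. No genuinely hard step is expected here: once \cite[Lemma 3.3]{laurent2009} is invoked, the remaining content is the standard fact that a homogeneous polynomial is lower-bounded on $\R^{n+1}$ if and only if it is non-negative there.
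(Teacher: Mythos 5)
Your proposal is correct and follows essentially the same route as the paper: reduce to the homogenization $\bar p$ via \cite[Lemma 3.3]{laurent2009}, then use the equivalence of non-negativity and lower-boundedness for homogeneous polynomials, which the paper states without proof and you justify with the standard scaling argument along a ray. No gaps.
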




Thanks to Lemma \ref{prop:nonneg}, the decision problem for the non-negativity of an arbitrary polynomial 
$p$ can be transformed into that of determining the lower-boundedness of its homogenization 
$\bar{p}$. The latter can be addressed by Algorithm \ref{alg:lbounded} introduced in the preceding section. We illustrate the this method by considering the following simple two examples.

\begin{example}\label{ex:nonneg1} Consider the bivariate quadratic polynomial $p=x_1^2+x_2^2 - 3x_1x_2+1$. Its homogenization is a trivariate quadratic form $\bar p=x_1^2+x_2^2 - 3x_1x_2+x_3^2$; $a=(1,2,3)$ is T-good for $\bar p$. Algorithm \ref{alg:lbounded} returns \texttt{false} because of $v(\bar{p},a)=6-4\neq 0$. Hence, $p$ is not non-negative. 
\end{example}

\begin{example}\label{ex:nonneg2} Consider the bivariate quartic form $p= x_1^4 + x_2^4 + x_1^2x_2^2 - 3x_1^3x_2 - 3x_1x_2^3$. Since $p$ is homogeneous, its lower-boundedness and non-negativity are equivalent. Applying Algorithm \ref{alg:lbounded} to $p$ at the T-good point $a=(1,2)$, yields \texttt{false}, as $v(p,a)=4$. Therefore, $p$ is not non-negative.
\end{example}

\textit{Deciding convexity.}
The polynomial $p$ in $\R[x]$ is convex if the following inequality holds for all $\alpha,\beta \in \R^n$ and $\gamma \in [0,1]$:
$$p(\gamma\alpha + (1-\gamma) \beta) \leq \gamma p(\alpha) + (1-\gamma) p(\beta).$$

We write $\nabla^2p$ to indicate the Hessian matrix of $p$. For $k\in [n]$, there are $\binom{n}{k}$ principal minors of order $k$ of $\nabla^2p$. It is clear that if $p$ is in $\R[x]$, then so are the principal minors. Moreover, if $\deg p =d$ then the degree of each principal minor of order $k$ is at most $k\times (d-2)$; particularly, the degree of $\det (\nabla^2p)$ may reach $n(d-2)$.

From \cite[Proposition 1.2.6]{bertsekas2003convex}, $p$ is convex over $\R^n$ if and only if its Hessian matrix $\nabla^2p$ is positive semi-definite for any $x$ in $\R^n$. Thanks to 
\cite[Corollary 7.1.5 and Theorem 7.2.5]{horn2012matrix}, the latter conclusion holds if and only if the principal minors of $\nabla^2p$ are all non-negative over $\R^n$.
Therefore, a necessary and sufficient condition for the convexity of polynomials is given below:
    
 \begin{lemma}
 \label{propr:HessianPSD}
A polynomial $p\in \R[x]$ is convex over $\R^n$ if and only if $\bar{q}$ is lower-bounded, for all principal minor $q$ of $\nabla^2p$.
\end{lemma}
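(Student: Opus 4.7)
The plan is to chain together three equivalences already available: the characterization of convexity via the Hessian, the characterization of positive semi-definiteness via principal minors, and the paper's own Lemma~\ref{prop:nonneg} relating non-negativity to lower-boundedness of the homogenization.

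First I would invoke \cite[Proposition 1.2.6]{bertsekas2003convex} (as cited in the paragraph preceding the statement) to reduce convexity of $p$ over $\R^n$ to the condition that $\nabla^2 p(x)$ is positive semi-definite for every $x \in \R^n$. Next, I would apply \cite[Corollary 7.1.5 and Theorem 7.2.5]{horn2012matrix} (again already cited): a real symmetric matrix is positive semi-definite if and only if all of its principal minors are non-negative. Since $\nabla^2 p(x)$ is symmetric and its principal minors, viewed as $x$ varies, are precisely the polynomials $q \in \R[x]$ listed as principal minors of $\nabla^2 p$, the PSD condition holding pointwise on $\R^n$ is equivalent to the assertion that every such polynomial $q$ is non-negative over $\R^n$.

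Finally, I would apply Lemma~\ref{prop:nonneg} to each such $q$: the polynomial $q \in \R[x]$ is non-negative over $\R^n$ if and only if its homogenization $\bar q$ is lower-bounded over $\R^{n+1}$. Quantifying over all principal minors of $\nabla^2 p$ yields the statement of the lemma. There is essentially no obstacle here; the only minor point to be careful about is that principal minors of a polynomial matrix are themselves polynomials (so Lemma~\ref{prop:nonneg} applies), and that the equivalence in Horn--Johnson concerns \emph{all} principal minors rather than only the leading ones --- these are exactly the $q$ ranged over in the statement. Once these three equivalences are stacked, the proof is a one-line deduction.
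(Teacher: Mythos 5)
Your proposal is correct and follows exactly the paper's own route: the paper derives this lemma by combining the Bertsekas characterization of convexity via the Hessian, the Horn--Johnson criterion that positive semi-definiteness is equivalent to non-negativity of all principal minors, and then Lemma~\ref{prop:nonneg} to pass from non-negativity of each minor $q$ to lower-boundedness of its homogenization $\bar q$. Your remark that all principal minors (not merely the leading ones) are needed is the same care the paper takes, so there is nothing to add.
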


It follows from Lemma~\ref{propr:HessianPSD} that the convexity of $p$ can be determined by checking the lower-boundedness of the homogenizations of the principal minors of $\nabla^2p$. Recall that deciding lower-boundedness of polynomials can be decided using Algorithm \ref{alg:lbounded}. We we illustrate this procedure in the following example.

\begin{example}\label{ex:convex}
  Consider the bivariate quartic form $p= x_1^4 + x_2^4 + 10 x_1^2x_2^2$. The Hessian matrix of $p$ is given as follows:
 \[\nabla^2p=\begin{bmatrix}
12x_1^2 + 20x_2^2 & 40 x_1 x_2 \\
40x_1 x_2 & 20 x_1^2 + 12x_2^2
\end{bmatrix}
    \]
The principal minors (of order $1$ and $2$) of $\nabla^2p$ are the following polynomials:
    \[ 12x_1^2 + 20x_2^2, \ 20 x_1^2 + 12x_2^2, \
    240x_1^4 - 1056x_1^2x_2^2 + 240 x_2^4.\]
Since these are homogeneous polynomials, we apply Algorithm \ref{alg:lbounded}, which yields the results \texttt{true}, \texttt{true}, and \texttt{false}, respectively. Consequently, we conclude that 
$p$ is not convex.
    \end{example}

\section{Practical experiments}\label{sec:experiment}
This section is twofold. First, we present the experimental results obtained by applying Algorithm~\ref{alg:lbounded} to determine the lower-boundedness of polynomials. Second, we compare these results with those obtained using existing quantifier elimination. The algorithm was implemented in {\sc Maple} 2024, and the results were obtained on a MacBook Pro M3 with 18 GB of RAM. 

In Steps 1-2 of Algorithm~\ref{alg:lbounded}, to compute the reduced Gr\"{o}bner basis of an ideal with respect to a given order, we use the $\mathtt{Basis}$ command from the $\mathtt{Groebner}$ package. The square-free part of a polynomial $\R(\varpi)[t]$ is computed using the $\mathtt{SquareFreePart}$ command provided by the $\mathtt{PolynomialTools}$ package. Furthermore, we implement a procedure to construct the Sturm sequence of a polynomial $\varphi\in\R(\varpi)[t]$ and to determine the sign variation counts $ v_{\varphi}(-\infty_{\F})$ and $v_{\varphi}(-\infty)$ relying on the rules \eqref{eq:sign_infty_F} and \eqref{eq:sign_infty}.
The code used in our experiments is openly available at:
\texttt{https://github.com/hieuvut/lbound}.

Our experiment is conducted with the following family of dense bivariate polynomials of degree $d$ 
with coefficients $1$:
\[p_d:=\sum_{i+j\leq d}x_1^ix_2^j.\]
In Table \ref{table:2}, we ignore the subscripts $p,a$ since the context is clear.
Let $d$ range from $3$ to $6$. The values of $\deg \varphi_{p,a}$ are reported in the second column. The time of computing  $\varphi$, i.e. Step 2 in Algorithm \ref{alg:lbounded}, is reported in the third column. The reported timings correspond to real elapsed time measured in seconds. The second-to-last column lists the total running time of Algorithm \ref{alg:lbounded}, including the step that verifies whether $a=(1,3)$ is $T$-good for $p_d$. We were unable to obtain a result within four hours of computation, which is indicated by the symbol $-$. 

\begin{table}[H]
\begin{tabular}{|c|c|c|c|c|c|c|c|}
\hline
$d$ & $\deg \varphi$ & computing $\varphi$& $v_{\varphi}(-\infty_{\F})$ & $v_{\varphi}(-\infty)$ & output & Alg.\ref{alg:lbounded} & QE\\ \hline
$3$   & 4              & 0.05           & 3                  & 2                  & \texttt{false}  & 0.07 & 0.11 \\ \hline
$4$   & 8              & 0.73           & 6                  & 6                  & \texttt{true}   & 1.24 & 197 \\ \hline
$5$  & 10             & 5.72           & 8                  & 5                  & \texttt{false}  & 26.5 &  -- \\ \hline
$6$  & 12             & 100.1            & 10                 & 10                 & \texttt{true}  & 209.1 & -- \\ \hline
\end{tabular}


\caption{Results of Algorithm \ref{alg:lbounded} for $p_d$ at $a=(1,3)$}
\label{table:2}
\end{table}

The last column reports the computation time for solving \eqref{eq:QE} using quantifier elimination (QE) by employing the \texttt{QuantifierElimination} command from the \texttt{SemiAlgebraicSetTools} sub-package of the \texttt{RegularChains} package \cite{chen2016quantifier}, available in {\sc Maple}. Across the last three cases, Algorithm \ref{alg:lbounded} shows a substantial efficiency advantage compared relative to quantifier elimination. Nevertheless, certain special cases may still favor QE, particularly when the polynomial is sparse or highly structured. For example, for the polynomial $p = x_1^6 + x_2^6$, which is both sparse and homogeneous, Algorithm~\ref{alg:lbounded} required 2.56 seconds, whereas QE completed the task in only 0.04 seconds.



\section*{Conclusions and Discussion}

This paper introduced the non-critical tangency value polynomial of a real polynomial $p$ at a given real point $a$, along with the notion of T-goodness. By exploiting these concepts, we proposed a probabilistic algorithm that decides whether $p$ is lower-bounded. The experimental results indicate that the proposed algorithm performs efficiently on small-scale instances and, in particular, surpasses the quantifier elimination approach on dense polynomials.

This study has concentrated on establishing the theoretical guarantees and practical implementation of the proposed algorithm, while a detailed analysis of its computational complexity has been left beyond the present scope. It would be of significant interest to investigate whether the algorithm admits a single-exponential complexity in the number of variables 
$n$. Such an analysis would provide a deeper understanding of its scalability and could guide further improvements of the method.

\bibliographystyle{amsplain}

\bibliography{references.bib}
\end{document}